\documentclass[11pt, reqno]{amsart}
\usepackage{amssymb}
\usepackage{latexsym}
\usepackage{amsmath}
\usepackage{amsfonts}
\usepackage[hidelinks]{hyperref}
\hypersetup{
  colorlinks   = true, 
  urlcolor     = blue, 
  linkcolor    = blue, 
  citecolor   = blue 
}
\usepackage{enumitem}
\usepackage{fancyhdr}
\usepackage{cleveref}
\usepackage{mathrsfs}
\usepackage[dvipsnames]{xcolor}
\usepackage{soul}

\usepackage{color}

\theoremstyle{plain}
\newtheorem{theorem}{Theorem}[section]

\newcommand{\R}{\ensuremath{\mathbb{R_+}}}

\newtheorem{lemma}[theorem]{Lemma}

\newtheoremstyle{remark}
    {} 
    {} 
    {}          
    {}          
    {\bfseries} 
    {.}         
    {.5em}      
    {}          

\theoremstyle{remark}
\newtheorem{remark}{Remark}[section]

\newcommand{\red}[1]{\textcolor{red}{#1}}

\newtheoremstyle{example}
    {\dimexpr\topsep/2\relax} 
    {\dimexpr\topsep/2\relax} 
    {}          
    {}          
    {\bfseries} 
    {.}         
    {.5em}      
    {}          

\theoremstyle{example}

\newtheoremstyle{definition}
    {\dimexpr\topsep/2\relax} 
    {\dimexpr\topsep/2\relax} 
    {}          
    {}          
    {\bfseries} 
    {.}         
    {.5em}      
    {}          

\theoremstyle{definition}
\newtheorem{definition}{Definition}[section]

\newtheorem*{similartheorem*}{Theorem \dualnumber{$'$}}

\numberwithin{equation}{section}

\newcommand{\Ha}{\mathbb{H}^n}

\setlength{\oddsidemargin}{0in} \setlength{\evensidemargin}{0in}
\setlength{\textwidth}{6.3in} \setlength{\topmargin}{-0.2in}
\setlength{\textheight}{9in}

\def\R{\mathbb{R}}
\def\H{\mathbb{H}}
\def\C{\mathbb{C}}
\def\Z{\mathbb{Z}}

\allowdisplaybreaks

\begin{document}
\title[Lacunary maximal functions on homogeneous groups]{Weighted estimates for lacunary maximal functions on homogeneous groups}

\keywords{Homogeneous group,
Heisenberg group, Lacunary maximal function, Weighted $L_p$ spaces}
{\let\thefootnote\relax\footnote{\noindent 2010 {\it Mathematics Subject Classification.} Primary 42B20, 26A33; Secondary 43A85}}

\author{Abhishek Ghosh and Rajesh K. Singh}
\address[Abhishek Ghosh] {Institute of Mathematics, Polish Academy of Sciences\\
\'{S}niadeckich 8\\
00-656 Warsaw\\
Poland}
\email{abhi170791@gmail.com}

\address[Rajesh K. Singh]{Department of Mathematics, Gitam Institute of Science, GITAM University, Visakhapatnam- 530045, A.P., India}
\email{rsingh4@gitam.edu}

\thanks{This research was funded in part by the National Science Center, Poland, grant 2021/43/D/ST1/00667.}

\pagestyle{headings}

\begin{abstract}
In this article, we study weighted estimates for a general class of lacunary maximal functions on homogeneous groups. As an application we derive improved weighted estimates for the lacunary maximal function associated to the Kor\'anyi spherical means as well as for the lacunary maximal function associated to codimension two spheres in the Heisenberg group.
\end{abstract}

\maketitle

\section{Introduction and Preliminaries}
Spherical maximal functions are widely studied object in Hramonic analysis on Euclidean spaces and in homogeneous groups. Given a Schwartz function $f,$ the spherical maximal function $\mathcal S$ is defined  by
\[
\mathcal{S}f(x)=\sup_{t>0}\biggl|\int_{{\mathbb{S}}^{n-1}}f(x-ty)\, d \sigma_{n-1}(y)\biggr|, \quad x \in \mathbb R^n,
\]
where  $d \sigma_{n-1}$   is the rotationally invariant measure on $\mathbb{S}^{n-1}$, normalized such that $ \sigma_{n-1}(\mathbb{S}^{n-1})=1$. While we would not delve into a detailed historical account here; in a pioneering work in \cite{SteinSph}, Stein proved that $\mathcal{S}$ is bounded on $L^p(\mathbb{R}^n)$, if and only if $n\geq 3$ and $p>n/(n-1).$ He also showed examples proving that the boundedness fails if $p\leq \frac{n}{n-1}$ and $n\geq 2$. Later, in \cite{Bourgain} Bourgain proved it for dimension two, concluding that $\mathcal{S}$ is bounded on $L^p(\mathbb{R}^2)$ if $p>2.$ In this article, our primary concern is to study weighted inequalities for lacunary maximal functions on homogeneous groups with a particular emphasis on the Heisenberg group. To state our main results, let us recall preliminaries regarding homogeneous groups, we mainly follow \cite{Folland-Stein} and \cite{HickmanJFA}.

A homogeneous group $G$ is a connected, simply connected nilpotent Lie group with associated real finite-dimensional Lie algebra ${\mathfrak g}$ such that ${\mathfrak g}$ is endowed with a family of algebra automorphisms $\{\delta_r\}_{r>0}$. Moreover, each $\delta_r$ is of the form $\delta_r= \exp(\Gamma\log r),\, \, r>0,$ where $\Gamma$ is a diagonalisable linear operator on ${\mathfrak g}$ with positive eigen-values. Since, in this case, the exponential map is a global diffeomorphism, $G$ can be identified with $\mathbb{R}^n$ and the Haar measure on $G$ can be identified with the $n$-dimensional Lebesgue measure. Therefore, the topological dimension of $G$ is $n$ and homogeneous dimension of $G$ is the quantity $Q=:\text{trace}(\Gamma)=\sum_{j=1}^{n}d_j,$ where $d_{j}>0$ are eigen-values of $\Gamma$ counted with multiplicity. Also, the maps $\exp  \circ \, \delta_{r}\circ \exp^{-1}$ are group automorphisms on $G$ and will be denoted by $\delta_{r}$ again for simplicity and will be called dilations of $G$. Finally, let $|\cdot|$ be a homogeneous quasi-norm, i.e.,  $|x| = 0$ if and only if $x=0$ where $0$ denotes the group identity, and $|\delta_r x | = r |x|$ for all $r>0$ and $x\in G,$ and there exists a constant $c_{G}\geq 1$ such that $|x.\, y|\leq c_{G}(|x|+|y|)$ for all $x, y\in G$. Having defined the preliminaries, we now focus on our main objects of study. Given a finite, compactly supported Borel measure $\mu$ on $G$ and $r>0$, define the dilates $\mu_r$ of $\mu$ as
$$\langle \mu_r,\phi \rangle := \int_{G} \phi( \delta_{r} (x))\,d\mu(x) \quad$$
for $\phi \in C_c(G)$, the space of compactly supported continuous functions on $G$. For any $k \in \Z $, we denote by $\mu_{k}$ the dyadic dilate of $\mu$, that is, $\mu_{k} := \mu_{2^{k}}$. Also, the reflection is defined as $$\langle{\tilde{\mu}}, {\phi}\rangle:= \int_{G} \phi(x^{-1})\,d\mu(x).$$ 

Following \cite{HickmanJFA}, we define maximal functions on the homogeneous groups. For $f \in C_{c}(G)$, we consider the averaging operators associated to such measures $\mu$ as following 
\begin{equation*}
    S[\mu]f(x) := f \ast \mu(x) = \int_{G} f(x \cdot y^{-1}) \,d\mu(y),  \qquad \textrm{ for $x \in G$.}
\end{equation*}
Associated to the measure $\mu$, the lacunary maximal function acting on $f \in C_{c}(G)$, is defined as 
\[
\mathcal{S}_{lac}[\mu]f(x) := \sup_{ k\in \Z}\big|S[\mu_k]f(x)\big|, \qquad  x \in G,
\]  
and the full maximal function as
\begin{equation*}
\mathcal{S}_{full}[\mu]f(x):= \sup_{r>0}\big|S[\mu_r]f(x)\big|\qquad \textrm{for}\, x \in G.
\end{equation*}
The study of lacunary maximal functions goes back to the work of C. P. Calderon in \cite{C}, where he proved that the lacunary spherical maximal function
$$\mathcal{S}_{lac}f(x)=\sup_{k\in \mathbb{Z}}\biggl|\int_{\mathbb{S}^{n-1}}f(x-2^{k}y)\, d\sigma_{n-1}(y)\biggr|, \quad x \in \mathbb{R}^n,$$
is bounded on $L^p(\mathbb{R}^n)$ for all $1<p<\infty.$ Recently, this result is extended in vast generality on homogeneous groups in \cite{HickmanJFA} and we are interested in proving weighted estimates for the lacunary maximal functions $\mathcal{S}_{lac}[\mu]$ on homogeneous group $G.$ We describe statements of our main results in the following section.
\subsection{Statement of main results}
The point of departure for our article stems from the very recent work of Sheri--Hickman--Wright in \cite{HickmanJFA} where the authors proved $L^p$ boundedness of general lacunary maximal functions
$\mathcal{S}_{lac}[\mu]$ on homogeneous groups $G$ under the assumption that $\mu$ satisfies the \textit{Curvature assumption (CA),} which says that there exists  natural number $N$ such that the iterated convolution $\mu^{(N)}$ is absolutely continuous with respect to the Haar measure of $G$ and there exists $\gamma>0, C_{\mu}>0$ such that the Radon-Nikodym derivative of $\mu^{(N)}$, say $h,$ satisfies 
\begin{align}
\int_{G}\big(|h(xy^{-1})-h(x)|+|h(y^{-1} x)-h(x)|\big)\, dx\leq C_{\mu}\, |y|^{\gamma}   
\end{align}
for all $y\in G.$ The iterated convolutions are defined as $\mu^{(2k+1)}=\mu^{(2k)}\ast \tilde{\mu}$ and $\mu^{(2k)}=\mu^{(2k-1)}\ast \mu$ for $k\in \mathbb{N}.$  The main result in \cite{HickmanJFA} states that
\begin{theorem}[\cite{HickmanJFA}]
\label{Hickman-main}
Let $G$ be a homogeneous group and let $ \mu$ be a finite, compactly supported Borel
measure on $G$ satisfying the \textit{Curavture assumption (CA)}. Then $\mathcal{S}_{lac}[\mu]$ maps $L^p(G)$ to $L^p(G)$ for $1<p\leq \infty.$
\end{theorem}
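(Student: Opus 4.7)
The plan is to split $\mu$ into a smooth approximate-identity piece and a mean-zero remainder, control the former by the Hardy-Littlewood maximal function on $G$, and treat the latter via a square function whose $L^p$ bounds come from almost-orthogonality enforced by the Curvature Assumption~(CA). Fix a non-negative bump $\phi\in C_c^\infty(G)$ with $\int_G\phi\,dx=\mu(G)$ and set $\sigma:=\mu-\phi$, so that $\mu_k=\phi_k+\sigma_k$ for every $k\in\Z$. Since $\{\phi_k\}$ is an $L^1$-bounded approximate identity adapted to the dilations $\delta_{2^k}$,
\[
\sup_{k\in\Z}|f*\phi_k(x)|\lesssim M_{HL}f(x),
\]
where $M_{HL}$ is the Hardy-Littlewood maximal operator on $G$ built from quasi-balls, known to be bounded on $L^p(G)$ for $1<p\leq\infty$. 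Since $\sup_k\|\mu_k\|_{TV}=\|\mu\|_{TV}$ handles the $L^\infty$ bound directly, the task reduces to bounding
\[
\sup_{k\in\Z}|f*\sigma_k(x)|\leq\Big(\sum_{k\in\Z}|f*\sigma_k(x)|^2\Big)^{1/2}=:G_\sigma f(x)
\]
on $L^p(G)$ for $1<p<\infty$, by the trivial pointwise domination of $\sup_k$ by the $\ell^2$-norm.

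The $L^2$ estimate $\|G_\sigma f\|_2\lesssim\|f\|_2$ is the heart of the matter: by duality it amounts to almost-orthogonality for the operators $S_k\colon f\mapsto f*\sigma_k$, which reduces to showing that $\|S_k S_j^*\|_{L^2\to L^2}$ decays in $|k-j|$. Two sources of decay are combined. On the smaller of the two scales, the cancellation $\int_G\sigma\,dx=0$, exploited by telescoping against a smooth approximation, yields decay of order $2^{-\gamma|k-j|}$. On the larger scale, iterating convolutions until an $N$-fold product appears and invoking (CA) transfers the $\gamma$-Hölder regularity of the density $h$ of $\mu^{(N)}$ into matching decay on the other side. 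A Cotlar-Stein summation over these estimates then delivers the $L^2$ bound.

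To pass from $L^2$ to $L^p$ for $1<p<\infty$ we appeal to vector-valued Calderón-Zygmund theory on the space of homogeneous type $(G,|\cdot|,dx)$. Viewing $f\mapsto(f*\sigma_k)_{k\in\Z}$ as convolution with the $\ell^2(\Z)$-valued kernel $\mathcal K(x)=(\sigma_k(x))_{k\in\Z}$, the required Hörmander regularity condition reads
\[
\int_{|x|>2c_G|y|}\Big(\sum_{k\in\Z}|\sigma_k(xy^{-1})-\sigma_k(x)|^2\Big)^{1/2}dx\lesssim 1\quad\text{for all }y\in G.
\]
Since $\sigma$ itself need not be absolutely continuous, this regularity is extracted by first passing to an $N$-fold iterate and using the Hölder bound on $h$ furnished by (CA), and then propagating back to the $\sigma_k$ via the smoothing effect of convolution against $\phi$. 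Together with the $L^2$ bound and the trivial $L^\infty$ estimate, the vector-valued Calderón-Zygmund theorem and interpolation yield $\mathcal{S}_{lac}[\mu]\colon L^p(G)\to L^p(G)$ for the full range $1<p\leq\infty$.

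The principal obstacle is the $L^2$ almost-orthogonality: without Plancherel's theorem in its usual form on a non-abelian homogeneous group, off-diagonal decay cannot be read off from Fourier estimates, and must instead be manufactured directly from the interplay between the mean-zero cancellation of $\sigma$ and the quantitative Hölder regularity of $\mu^{(N)}$ provided by (CA). A closely related technical nuisance is that $\sigma$ carries no \emph{a priori} regularity; the remedy is to perform every smoothness-based estimate at the level of the iterated convolution, where (CA) supplies a concrete Hölder bound on a genuine function, and only then to unwind back to $\sigma_k$.
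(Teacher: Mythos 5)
First, note that the paper does not prove this statement at all: Theorem~\ref{Hickman-main} is quoted from Sheri--Hickman--Wright \cite{HickmanJFA}, and the present paper only uses it (and Proposition~2.2 of \cite{HickmanJFA}, via \eqref{Lacunary-piece}) as a black box. So your proposal has to be measured against the argument of the cited work. Its overall architecture is reasonable, and several pieces are genuinely in the spirit of that proof: the splitting $\mu=\phi+\sigma$ with $\int_G\sigma=0$, the domination $\sup_k|f\ast\phi_k|\lesssim\mathfrak{M}f$, the trivial $L^\infty$ bound, and above all the recognition that, absent a usable Plancherel theorem, the $L^2$ decay must be manufactured by iterating convolutions until $\mu^{(N)}$ appears and feeding in the H\"older bound from (CA).

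The genuine gap is the passage from $L^2$ to $L^p$, $1<p<2$, by vector-valued Calder\'on--Zygmund theory applied to the $\ell^2(\mathbb{Z})$-valued kernel $\mathcal{K}(x)=(\sigma_k(x))_{k\in\mathbb{Z}}$. Under (CA) the measure $\mu$ may be singular (the Kor\'anyi sphere measure is the motivating example), so $\sigma_k=\mu_k-\phi_k$ is not a function and the displayed H\"ormander integral is not even defined; more seriously, the regularity supplied by (CA) concerns only the density $h$ of the $N$-fold convolution $\mu^{(N)}$, and there is no mechanism to ``propagate it back'' to $\sigma$: identities of the type $\|T\|^{2N}=\|(T^{*}T)^{N}\|$ transfer iterated information to \emph{operator norms} on $L^2$, but they yield no pointwise or integrated kernel regularity for $\sigma_k$ itself, and ``the smoothing effect of convolution against $\phi$'' is unavailable because the operator in question is convolution with $\sigma_k$, not with $\sigma_k\ast\phi_k$. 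The standard repair --- and the route of \cite{HickmanJFA} --- is to keep the Littlewood--Paley decomposition in $j$ inside the argument: for each fixed $j$ the kernels $\phi_{k+j}\ast\mu_k$ \emph{are} smooth functions, so one proves an $L^q$ (or weak $(1,1)$) bound for $\sup_k|f\ast\phi_{k+j}\ast\mu_k|$ with a constant allowed to grow in $|j|$, interpolates against the $L^2$ bound carrying exponential decay $2^{-\delta|j|}$ (this is exactly the content of \eqref{Lacunary-piece} for $1<q\le 2$), and then sums over $j$; the range $2<p\le\infty$ follows by interpolation with the trivial $L^\infty$ estimate. A smaller caveat of the same nature affects your $L^2$ step: the direction of decay you attribute to the cancellation $\int_G\sigma=0$ alone also needs regularity of the \emph{other} factor, and since $\sigma_j$ is merely a measure, cancellation by itself yields nothing --- both directions of the almost-orthogonality estimates must pass through the iterate $\mu^{(N)}$ (or the smooth part $\phi$), which is precisely why this $L^2$ estimate is the main technical achievement of \cite{HickmanJFA} rather than a routine Cotlar--Stein application.
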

In our first main result we address weighted estimates for the lacunary maximal function $\mathcal{S}_{lac}[\mu]$ on homogeneous groups $G.$  We need a brief description of Muckenhoupt weights on homogeneous groups. Since we are defining $f\ast \mu$ as our spherical mean, it is necessary to work with
the left invariant metric $d_{L}(x, y)= |x^{-1}\cdot y|$ on $G$. Thus, for $a \in G$, the ball of radius $r$ centered at $a$ is given by $B(a, r):= a \cdot B(0,r)$. We now recall the definition of $\mathcal{A}_p (G)$ weights under this quasimetric.  
\begin{definition}
Let  $1<p<\infty,$ a weight $\omega$ in $\mathcal{A}_{p}(G)$ is a locally integrable function on $G$ such that $w>0$ a.e. and 
\[\sup_{B}\bigg(\frac{1}{|B|}\int_{B} \omega\,  dx\bigg) \bigg(\frac{1}{|B|}\int_{B} \omega^{-\frac{1}{p-1}}\,  dx\bigg)^{p-1}<\infty,\]
where the supremum is taken over all balls $B$ in $G.$
\end{definition}
The Hardy-Littlewood maximal function on $G,$ will be denoted by $\mathfrak{M}$ and is defined as $$\mathfrak{M}f(x)=\sup\limits_{B\ni x} \frac{1}{|B|}\int_{B}|f(y)|\, dy,$$
where the supremum is taken over all balls $B$ containing the point $x$ and $f$ is a locally integrable function on $G$. We simply denote the weighted spaces $L^p(G, \omega)$ by $L^p(\omega)$ unless there is a confusion. It is well known that for $1<p<\infty,$ $\mathfrak{M}: L^p(G, \omega)\to L^p(G, \omega)$ if and only if $\omega\in \mathcal{A}_{p}(G).$ We are at a position to state our main results. Our first result concludes weighted estimates for the lacunary maximal function $\mathcal{S}_{lac}[\mu]$ once we have uniform weighted estimates for the single averaging operators $S[\mu_k].$ 
\begin{theorem}
\label{main:suff}
Let $G$ be a homogeneous group and let $\mu$ be a finite, compactly supported Borel
measure on $G$ satisfying the \textit{Curvature assumption (CA)}. Let $1<p<\infty$ and $\omega\in \mathcal{A}_{p}(G).$ If there exists a constant $C,$ independent of $k,$ such that 
\begin{align}
\label{uniform}
\int_{G}|S[\mu_k]f(x)|^p\, \omega(x)\, dx\leq C \int_{G} |f(x)|^p\, \omega(x)\, dx     
\end{align}
holds for all $k\in\mathbb Z$ then $\mathcal{S}_{lac}[\mu]: L^p(\omega^s) \to L^p(\omega^s)$  for all $0\leq s<1.$
\end{theorem}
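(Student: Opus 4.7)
The plan is to deduce the weighted supremum bound from two ingredients: the unweighted $L^p$ boundedness supplied by Theorem~\ref{Hickman-main}, and the hypothesized uniform $L^p(\omega)$ bound on the single averages $S[\mu_k]$. The mechanism is a Stein--Weiss change-of-measure interpolation, coupled with a Littlewood--Paley-type decomposition of $\mu$ (via its iterated convolution $\mu^{(N)}$) that exploits the H\"older regularity given by the curvature assumption.

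As a preliminary step, I would verify that $\omega \in \mathcal{A}_{p}(G)$ implies $\omega^s \in \mathcal{A}_{p}(G)$ for every $s \in [0,1]$: this is a direct consequence of Jensen's inequality applied to the $\mathcal{A}_p$ condition, and in particular yields that $\mathfrak{M}$ is bounded on $L^p(\omega^s)$. Consequently any operator pointwise dominated by $\mathfrak{M}$ inherits $L^p(\omega^s)$ boundedness for all $s\in[0,1]$.

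The main step is as follows. Using the H\"older continuity of $h = d\mu^{(N)}/dx$ with exponent $\gamma$ from the curvature assumption, I would decompose $\mu^{(N)} = h_0\,dx + \sum_{j\ge 1} h_j\,dx$, where $h_0$ is a smooth low-frequency bump and each $h_j$ is concentrated at spatial scale $2^{-j}$ with $\|h_j\|_{L^1(G)} \lesssim 2^{-\gamma j}$. The low-frequency piece $h_0\,dx$ produces a lacunary maximal function that is pointwise dominated by $\mathfrak{M}$, hence bounded on $L^p(\omega^s)$. For each high-frequency piece $h_j\,dx$, I produce two endpoint estimates for its lacunary maximal function: an unweighted $L^p(G)$ bound with geometric gain $\lesssim 2^{-\gamma j}$ (from Theorem~\ref{Hickman-main} applied with the measure $h_j\,dx$ together with the size of $\|h_j\|_{L^1}$), and an $L^p(\omega)$ bound with no geometric gain (from the hypothesized uniform single-average estimate combined with a truncation). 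Stein--Weiss change-of-measure interpolation at level $s$ then yields a bound of order $2^{-\gamma j(1-s)}\|f\|_{L^p(\omega^s)}$ per piece; summing in $j$ converges precisely when $s < 1$. This is the source of the restriction in the theorem.

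The principal obstacle I anticipate is the reduction from $\mathcal{S}_{lac}[\mu]$ to $\mathcal{S}_{lac}[\mu^{(N)}]$, to which the H\"older decomposition naturally applies. I would handle this through a Calder\'on-type iterated convolution inequality, exploiting the identity $(\mu^{(N)})_k = (\mu_k)^{(N)}$ on homogeneous groups to control $|S[\mu_k]f|^{2N}$ pointwise by quantities built from $S[\mu_k^{(2N)}](|f|^{2N})$. A secondary technical point is to verify that the hypothesized single-average weighted bound is preserved under the dyadic decomposition into $h_j$'s (as each dyadic piece inherits uniform single-average bounds by Young's inequality and the $\mathcal{A}_p$ boundedness of $\mathfrak{M}$), and to ensure that the Stein--Weiss interpolation constants track through in a manner that produces the claimed estimate uniformly in $f$.
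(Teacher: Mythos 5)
Your overall skeleton (interpolate a weighted estimate with no gain against an unweighted estimate with exponential gain, and sum the gain, which is where $s<1$ enters) is the same spirit as the paper, but the two steps you use to produce those endpoint estimates contain genuine gaps. First, the reduction of $\mathcal{S}_{lac}[\mu]$ to $\mu^{(N)}$ by a pointwise Calder\'on-type inequality is false: there is no pointwise control of $|S[\mu_k]f|^{2N}$ by quantities of the form $S[\mu_k^{(2N)}](|f|^{2N})$. Already for $N=1$ and the Euclidean sphere measure $\sigma$, taking $f=\chi_{A_\delta}$ with $A_\delta$ the $\delta$-neighbourhood of the unit sphere centred at $x$ gives $(f\ast\sigma(x))^2\simeq 1$ while $(f^2)\ast(\sigma\ast\tilde{\sigma})(x)\simeq \delta$, so no such pointwise bound can hold. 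The passage from $\mu$ to its iterated convolution is an $L^2$/$TT^*$-type (integrated) mechanism, and it only yields useful decay after a frequency decomposition of the \emph{function}; this is exactly how the paper proceeds: it writes $f=\sum_j f\ast\phi_j$ and imports the unweighted estimate $\|\sup_k|S[\mu_k](f\ast\phi_{k+j})|\|_{L^q}\lesssim 2^{-|j|\delta(q)}\|f\|_{L^q}$ from Proposition 2.2 of \cite{HickmanJFA}, rather than decomposing $\mu^{(N)}$ itself.

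Second, neither of your per-piece endpoint bounds follows from the ingredients you cite. Theorem~\ref{Hickman-main} is qualitative: its operator norm is not proportional to the total mass of the measure, so ``Theorem~\ref{Hickman-main} applied to $h_j\,dx$ together with $\|h_j\|_{L^1}\lesssim 2^{-\gamma j}$'' does not produce an operator-norm gain $2^{-\gamma j}$ (the normalised pieces $h_j/\|h_j\|_1$ need not satisfy the curvature assumption with constants uniform in $j$, and making this quantitative essentially amounts to reproving the decay estimate of \cite{HickmanJFA}). More seriously, the hypothesis \eqref{uniform} concerns the single averages $S[\mu_k]$, not the lacunary maximal function of the pieces $h_j\,dx$: passing from uniform single-scale weighted bounds to a weighted bound for $\sup_k$, even with no gain, is precisely the content of the theorem and cannot be obtained by a truncation or union bound (which diverges in $k$) nor by domination by $\mathfrak{M}$ (that works only for the low-frequency piece; for $h_j$ concentrated at scale $2^{-j}$ the domination constant grows like a power of $2^{j}$, which destroys the summability after interpolation as $s\to 1$). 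The paper breaks this circularity for $1<p<2$ with Christ's bootstrap: it introduces the best constant $\Upsilon(L)$ for the maximal operator truncated to $|k|\le L$ on $L^p(\omega^s)$, derives an $\ell^\infty$-valued inequality with constant $\Upsilon(L)$ and an $\ell^p$-valued inequality from \eqref{uniform}, interpolates to an $\ell^2$-valued bound, applies the weighted square-function estimate of Lemma~\ref{square-lemma}, interpolates with the unweighted $j$-decay, sums in $j$, and concludes $\Upsilon(L)\lesssim\Upsilon(L)^{1-\theta}$, hence $\Upsilon(L)\lesssim 1$ (the case $2\le p<\infty$ being handled by a direct union bound plus square function). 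Without an argument of this type, your proposal leaves the key weighted maximal estimate unproven.
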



For the special case of power weights, the following theorem concerns the end-point cases which are not covered by the previous theorem. 
\begin{theorem}
\label{Lac-end}
Let $G$ be a homogeneous group and let $\mu$ be a finite, compactly supported Borel
measure on $G$, supported away from the origin and satisfying the \textit{Curvature assumption (CA)}. Let $\alpha<0,$ and $1<p<\infty.$ If there exist a constant $C$, independent of $k \in \Z$, such that 
\begin{align}
\label{unoform-end}
\int_{G}|S[\mu_k]f(x)|^p\, |x|^{\alpha}\, dx\leq C \int_{G} |f(x)|^p\, |x|^{\alpha}\, dx.     
\end{align}
Then,
\begin{align}
\label{lac-end}
\int_{G}|S_{lac}[\mu]f(x)|^p\, |x|^{\alpha}\, dx\leq C \int_{G} |f(x)|^p\, |x|^{\alpha}\, dx. 
\end{align}
\end{theorem}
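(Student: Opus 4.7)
My plan is to adapt the strategy of Theorem~\ref{main:suff} to the endpoint of power weights $|x|^\alpha$ with $\alpha<0$ by exploiting the hypothesis that $\mu$ is supported away from the origin. This support condition forces the convolution $S[\mu_k]f$ to sample $f$ on a controlled dyadic annulus around $x$, allowing an annular decomposition that sidesteps the Muckenhoupt hypothesis used in Theorem~\ref{main:suff}. The proof will combine the unweighted lacunary bound (Theorem~\ref{Hickman-main}) with the single-scale weighted hypothesis~\eqref{unoform-end} and the \textit{Curvature assumption (CA)}.

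First I would decompose $f = \sum_{j\in\mathbb{Z}} f\chi_{A_j}$, with $A_j := \{x\in G : 2^{j-1} \le |x| < 2^j\}$. Since $\mu$ is supported in $\{a\le |y|\le b\}$ with $0 < a \le b < \infty$, the dilate $\mu_k$ is supported in $\{a2^k\le|y|\le b2^k\}$, and the quasi-triangle inequality shows that for $x\in A_i$ the contribution $S[\mu_k](f\chi_{A_j})(x)$ is controlled by two regimes: a \emph{local regime} $k\le i+O(1)$ in which only $j$ with $|j-i|=O(1)$ contribute, and a \emph{global regime} $k\ge i+O(1)$ in which only $j$ with $|j-k|=O(1)$ contribute. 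This localization is the key geometric ingredient coming from the support condition.

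In the local regime the weight $|x|^\alpha \sim 2^{i\alpha}$ is essentially constant on $A_i$, so applying Theorem~\ref{Hickman-main} (in its unweighted form) to $f\chi_{\tilde A_i}$ on a suitable thickening $\tilde A_i$ of $A_i$ yields
\[
\int_{A_i}\bigl(\sup_{k\le i+c}|S[\mu_k]f(x)|\bigr)^p|x|^\alpha\, dx \lesssim 2^{i\alpha}\|f\chi_{\tilde A_i}\|_p^p \lesssim \int_{\tilde A_i}|f|^p|x|^\alpha\, dx,
\]
and summing over $i$ with finite overlap of $\tilde A_i$ controls this part by $\|f\|^p_{L^p(|x|^\alpha)}$. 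In the global regime only $f\chi_{\tilde A_k}$ contributes to $S[\mu_k]f(x)$ for $x\in A_i$ with $k>i$; applying \eqref{unoform-end} at each single scale gives $\|S[\mu_k](f\chi_{\tilde A_k})\|_{L^p(|x|^\alpha)}^p \lesssim \int_{\tilde A_k}|f|^p|x|^\alpha\, dx$.

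The main obstacle is passing from these single-scale estimates in the global regime to a bound on the supremum: a naive $\sup \le (\sum)^{1/p}$ argument followed by summing over $i$ produces a divergent double series $\sum_i\sum_{k>i}\int_{\tilde A_k}|f|^p|x|^\alpha\, dx$. The resolution will rely on the curvature assumption~\textit{(CA)}: following the strategy of \cite{HickmanJFA}, I would write $\mu_k = \phi_k + (\mu_k-\phi_k)$ for a smooth bump $\phi$; the smooth part $\sup_k|f*\phi_k|$ is then dominated by an annular-type maximal function that is bounded on $L^p(|x|^\alpha)$ for $\alpha<0$ thanks to the annular localization forced by the support condition on $\mu$, while the singular remainder admits a square-function estimate with geometric decay derived from the smoothness of iterated convolutions $\mu^{(N)}$. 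Combining the local and global bounds then closes the proof.
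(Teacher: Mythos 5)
Your overall skeleton (dyadic annuli in $x$, a local/far split exploiting that $\mu$ is supported in an annulus $\{a\le|y|\le b\}$, the unweighted lacunary bound for the local part and the hypothesis \eqref{unoform-end} for the far part) is the same as the paper's, but both regimes as you have written them contain genuine errors. In the local regime, the claim that for $x\in A_i$ and $k\le i+O(1)$ only annuli $A_j$ with $|j-i|=O(1)$ contribute is false when $k$ is comparable to $i$: since $|\delta_{2^k}y^{-1}|\simeq 2^k\simeq 2^i$ on $\mathrm{supp}\,\mu_k$, the point $x\cdot\delta_{2^k}y^{-1}$ can land anywhere in the ball $B(0,C2^i)$ (in $\mathbb{R}^n$ this is the sphere of radius $|x|$ centred at $x$ passing through the origin). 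Hence $\tilde A_i$ must be taken to be the whole ball $B(0,C2^i)$, these sets are nested rather than finitely overlapping, and your ``finite overlap'' summation collapses. The sum $\sum_i 2^{i\alpha}\int_{|y|\lesssim 2^i}|f|^p$ does converge to $\|f\|^p_{L^p(|x|^\alpha)}$, but only via the geometric series in $i\ge \ell$, i.e.\ precisely because $\alpha<0$; this is where the paper uses the sign of $\alpha$ (in its estimate of the terms $\mathcal{I}_j^1$), and your write-up never invokes it.

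In the global regime the situation is the reverse: the obstacle you describe is not there, and the machinery you propose to overcome it is not a proof. The ``divergent double series'' arises only because you bound $\int_{A_i}|S[\mu_k](f\chi_{\tilde A_k})|^p|x|^\alpha\,dx$ by the full norm $\|S[\mu_k](f\chi_{\tilde A_k})\|^p_{L^p(|x|^\alpha)}$ separately for every $i<k$. If instead you keep the spatial localization, interchange the sums, and use that $\sum_{i<k}\int_{A_i}(\cdot)\le\int_{\{|x|\lesssim 2^k\}}(\cdot)$, then a single application of \eqref{unoform-end} per scale $k$ gives $\sum_k\int_{\{c 2^k\le|x|\le C2^k\}}|f|^p|x|^\alpha\,dx\lesssim\|f\|^p_{L^p(|x|^\alpha)}$ by the bounded overlap in $k$ of the input annuli; no further idea is needed, and this is exactly the paper's treatment of $\mathcal{I}_j^2$. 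By contrast, your proposed fix --- splitting $\mu_k=\phi_k+(\mu_k-\phi_k)$ and invoking a weighted square-function estimate with geometric decay from the curvature assumption --- is unsubstantiated: the decay estimates derived from (CA) in \cite{HickmanJFA} (and used in the paper only through the unweighted bound \eqref{Lacunary-piece}) are unweighted, and nothing in your sketch shows how to transfer them to $L^p(|x|^\alpha)$ at this endpoint. As written, then, the proposal does not close: repair the local regime with the $\alpha<0$ geometric-series argument over nested balls, and replace the (CA)-based detour in the global regime with the elementary interchange of sums.
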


As an application of Theorem~\ref{main:suff} and Theorem~\ref{Lac-end}, we provide the following  results concerning the power weights for the lacunary maximal functions on the Heisenberg group,  $\Ha.$ We begin with a brief description of the Heisenberg group. The Heisenberg group, $\mathbb{H}^n$, is the two step nilpotent Lie group with underlying manifold $\mathbb{C}^n \times \mathbb{R}$ associated with the group law
\begin{equation}
(z,t) \cdot (w,s) := \left(  z + w, t + s + \frac{1}{2}  \Im(  z . \bar{w}) \right),\ \ \text{for all} \ \ (z,t), (w,s) \in \mathbb{H}^n.
\end{equation} 
We have a family of non-isotropic dilations defined by $\delta_{r}(z,t):=(rz,r^2t)$, for all $(z,t) \in \mathbb{H}^n$, for every $r>0$. The Kor\'anyi norm on $\mathbb{H}^n$ is defined by
 \begin{equation*}
     |(z,t)|:= \left( \|z \|^4 + t^2 \right)^{\frac{1}{4}}, \ \ \    (z,t) \in \mathbb{H}^n, 
 \end{equation*}
which is homogeneous of degree 1, that is $|\delta_{r}(z,t)|= r \,  |(z,t)|$. Here, $\| z \|$ denotes the Euclidean norm of $z \in \C^{n}$. The Haar measure on $\mathbb{H}^n$ coincides with the Lebesgue measure $dz dt$. Let $B(0,r):=\{ (z,t) \in \H^n : |(z,t)| < r \}$ be the ball of radius $r$ with respect to Kor\'anyi norm. One has its measure $|B(0,r)| = C_{Q} \, r^{Q},$ where $Q=(2n + 2)$ is known as the homogeneous dimension of $\H^n$. Moreover, $d=2n+1$ will denote the \textit{topological dimension} of the Heisenberg group $\H^n$. The convolution of $f$ with $g$ on $\H^{n}$ is defined by
\begin{equation*}
    f * g \, (x) = \int_{\H^{n}}  f(x y^{-1}) g(y) dy, \ \ \ x \in \H^{n}.
\end{equation*}
Given $f$ on $\Ha,$ $\delta_r f $  denotes $ \delta_r f(z,t)=f(\delta_r(z,t))$, for all $(z,t)\in \Ha.$ Also, $ \sigma_r = \delta_r \sigma $ where $\sigma_r$ is defined by $\langle \sigma_r, \phi \rangle :=\langle \sigma, \delta_r \psi\rangle$ for all $\psi\in C^{\infty}_{c}(\Ha).$ We recall the Kor\'anyi sphere 
\[\Sigma_{K}:=\{(u, s)\in \mathbb{H}^n: |(u, s)|=\left( \|u \|^4 + s^2 \right)^{\frac{1}{4}}=1 \}.\]

Let $\sigma$ denotes the normalised surface measure on $\Sigma_{K}$ induced by the Lebesgue measure on $\mathbb{R}^{2n+1}$. Finally, we define the Kor\'anyi spherical mean $S_r$ by 
\begin{align}
S_r f(x) = f\ast \sigma_r(x) &= \int_{|y|=1} f(x \cdot \delta_r y^{-1}) \, d\sigma(y)\nonumber\\
&=\int_{|(u, s)|=1} f\big(z-ru, t-r^2s- {\textstyle\frac{1}{2} } r \Im(  z . \bar{u})\big) \, d\sigma(u, s) 
\end{align}
For $k\in \mathbb{Z},$ we simply denote
$$S_k f(x)=f\ast \sigma_k(x)= \int_{|y|=1} f(x \cdot \, \delta_{2^k} y^{-1}) \, d\sigma(y).$$
For $f\in C_{c}(\Ha),$ the lacunary maximal functions on $\Ha$ is given by
$$\mathcal{M}_{lac}f(x):=\sup_{k\in \mathbb{Z}} | S_k f(x)|, \ \ x \in \Ha.$$
Our result for the lacunary maximal function $\mathcal{M}_{lac}$ asserts
\begin{theorem}
\label{main:koranyi}
Let $d=(2n+1)$ denotes the topological dimension of $\Ha,$ and $1<p< \infty$. Then, 
$$\int_{\mathbb{H}^n} |\mathcal{M}_{lac}f(x)|^p\, |x|^{\alpha}\, dx\leq C_{n, \alpha} \int_{\mathbb{H}^n} |f(x)|^p\, |x|^{\alpha},$$ 
holds for all $f\in L^p(\mathbb{H}^n, |x|^\alpha)$ provided
$-(d-1)\leq \alpha< (d-1)(p-1).$
\end{theorem}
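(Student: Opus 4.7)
The plan is to deduce Theorem~\ref{main:koranyi} from Theorem~\ref{main:suff} in the open range $\alpha\in(-(d-1),(d-1)(p-1))$ and from Theorem~\ref{Lac-end} at the endpoint $\alpha=-(d-1)$, in both cases applied to $\mu=\sigma$, the normalised Kor\'anyi surface measure on $\Sigma_{K}$. Two ingredients are needed: that $\sigma$ satisfies the Curvature Assumption~(CA), and a uniform single-scale weighted bound
\[
\int_{\Ha}|S_k f(x)|^p\, |x|^\beta\, dx \le C_\beta \int_{\Ha}|f(x)|^p\, |x|^\beta\, dx,\qquad k\in\mathbb{Z},
\]
with $C_\beta$ independent of $k$, valid for every $\beta\in[-(d-1),(d-1)(p-1))$.

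For (CA), the Kor\'anyi sphere is a smooth compact hypersurface in $\mathbb{R}^{2n+1}$ whose Gaussian curvature vanishes only on a lower-dimensional exceptional set; a standard iterated-convolution computation of the type carried out in~\cite{HickmanJFA} shows that for $N$ large enough, $\sigma^{(N)}$ is absolutely continuous with a H\"older continuous, compactly supported density, so (CA) holds.

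For the uniform single-scale bound, the identity $\sigma_k=(\delta_{2^{-k}})_{\ast}\sigma$ together with the change of variables $x=\delta_{2^{k}}z$ reduces the inequality for all $k\in\mathbb{Z}$ to the case $k=0$, with the same constant. For $S_1$ I would split $f$ dyadically according to $|y|\sim 2^{j}$: when $|j|$ is large, $|x|$ and $|x\cdot y^{-1}|$ with $|y|=1$ are comparable on the support of $\sigma$, so the power weight is essentially constant on the relevant annulus and the unweighted $L^p$-bound of the Kor\'anyi spherical mean disposes of the off-diagonal pieces; the block where $|x|\sim|y|\sim 1$ is handled by a Stein--Weiss type inequality keyed to the codimension of $\Sigma_{K}$ inside $\Ha$, and this is where the constraint $-(d-1)\le\beta<(d-1)(p-1)$ originates.

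With both ingredients in hand, for any $\alpha\in(-(d-1),(d-1)(p-1))$ I choose $s\in(0,1)$ close enough to $1$ so that $\alpha/s$ still lies in $[-(d-1),(d-1)(p-1))$; then $\omega:=|x|^{\alpha/s}\in\mathcal{A}_{p}(\Ha)$, since $\alpha/s\in(-Q,Q(p-1))$ as $Q=d+1>d-1$. The uniform bound then verifies the hypothesis~\eqref{uniform} of Theorem~\ref{main:suff}, whose conclusion gives the boundedness of $\mathcal{M}_{lac}=\mathcal{S}_{lac}[\sigma]$ on $L^p(\omega^s)=L^p(\Ha,|x|^\alpha)$. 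At the endpoint $\alpha=-(d-1)$ the measure $\sigma$ is supported away from the identity and $\alpha<0$, so Theorem~\ref{Lac-end} applies directly with the single-scale bound supplying its hypothesis~\eqref{unoform-end}. The main obstacle is the single-scale weighted estimate at the lower endpoint $\beta=-(d-1)$: the off-diagonal dyadic argument reduces to the known unweighted $L^p$-bound in a routine way, but reaching the sharp lower power requires a careful Hardy-type inequality exploiting both the precise geometry of $\Sigma_{K}$ and the fact that $\sigma$ does not charge the origin.
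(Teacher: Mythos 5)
Your overall reduction is exactly the paper's: verify (CA) for the Kor\'anyi measure (the paper simply cites Lemma~3.2 of \cite{HickmanJFA}), feed a uniform single-scale weighted bound for $S_{2^k}$ into Theorem~\ref{main:suff} (choosing $s<1$ with $\alpha/s$ still in the admissible range, as you do) to get the open range $-(d-1)<\alpha<(d-1)(p-1)$, and use Theorem~\ref{Lac-end} at $\alpha=-(d-1)$. So the deduction scheme is fine.

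The genuine gap is the uniform single-scale estimate itself, which is the paper's main new ingredient (Theorem~\ref{single-Lp}) and which you only sketch. Your plan --- scale to $k=0$, dispose of the regimes $|x|\gg1$ and $|x|\ll1$ by near-constancy of the weight, and handle the block $|x|\simeq 1$ by ``a Stein--Weiss type inequality keyed to the codimension of $\Sigma_K$'' --- is not an argument: the sharp thresholds $\pm(d-1)$ do not come from an off-the-shelf Stein--Weiss inequality but from a quantitative cap estimate for the Kor\'anyi sphere, namely $\sigma\big(\Sigma_K\cap\{y:\|x-y\|\simeq 2^{-k}\}\big)\lesssim 2^{-2nk}$, which the paper verifies separately near the north pole (where the sphere osculates to order $4$) and near the equator, and whose sharpness is checked by a Taylor expansion at the pole. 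The paper packages this as the pointwise $\mathcal{A}_1$-type bound $S_1(|x|^{\beta})\lesssim|x|^{\beta}$ for $-(d-1)<\beta\le 0$ (Lemma~\ref{A1sph}) and then obtains the full two-sided range via Jawerth's factorization theorem \cite{JW} (as in \cite{JV}), writing $|x|^{\beta}=w_0w_1^{1-p}$ with $Sw_i\lesssim w_i$; the positive powers come from the choice $w_1=|x|^{-\beta/(p-1)}$, i.e.\ by duality. Moreover, the endpoint $\beta=-(d-1)$, which you explicitly leave as an open ``Hardy-type inequality'', is handled in the paper by modifying the factorization weights \`a la Duoandikoetxea--Vega, taking $w_0,w_1$ with an extra singularity $|1-|x||^{-\gamma}$ on the annulus $\tfrac14<|x|<4$ and checking $S w_i\lesssim w_i$ there via a dyadic decomposition in the distance to the unit sphere. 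Without this block of work your proof of Theorem~\ref{main:koranyi} is incomplete: both the open-range step (through Theorem~\ref{main:suff}) and the endpoint step (through Theorem~\ref{Lac-end}) require the uniform weighted bound \eqref{uniform}/\eqref{unoform-end} as input, and you have not supplied it, particularly at the endpoint exponent where it is needed verbatim.
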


\begin{remark}
We would like to remark that, compared to previously known results, the above results concludes weighted boundedness for  $\mathcal{M}_{lac}$ for a significantly larger range of $\alpha,$ and we show it details in Section~\ref{comparison}.   
\end{remark}

We shall also  consider averages over horizontal Euclidean spheres in the Heisenberg group. Let $d \sigma_{2n-1}$ denote the normalised surface measure on the Horizontal Sphere $\mathbb{S}^{2n-1} \times \{0\} \subseteq \R^{2n+1}$. The spherical mean over horizontal spheres will be denoted by the following
\begin{equation}\label{eq: horizontal spherical av}
    H_{r} f(z, t) = \int_{\mathbb{S}^{2n-1}} f\big(z-r u, t- {\textstyle\frac{1}{2} } r  \Im(z. \bar{u})\big) \,d \sigma_{2n-1}(u), \qquad (z, t) \in \Ha.
\end{equation}
and the associated lacunary maximal function is 
$${M}_{lac}f(x):=\sup_{k\in \mathbb{Z}} | H_k f(x)|, \ \ x \in \Ha.$$
Here, $H_k$ is the shorthand for $H_{2^k}$. Our result for the lacunary maximal function ${M}_{lac}$ reads as follows.
\begin{theorem}
\label{main:hori}
Let $n \geq 2$ and let $1<p< \infty$. Then,
$$\int_{\mathbb{H}^n} |M_{lac}f(x)|^p |x|^{\alpha}\, dx\leq C_{n, \alpha} \int_{\mathbb{H}^n} |f(x)|^p |x|^{\alpha} dx,$$ 
holds for all $f\in L^p(\mathbb{H}^n, |x|^\alpha)$ provided $-(2n-1)\leq \alpha< (2n-1)(p-1).$
\end{theorem}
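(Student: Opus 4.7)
The plan is to deduce Theorem~\ref{main:hori} by applying the abstract Theorems~\ref{main:suff} and~\ref{Lac-end} to $\mu=\sigma_{2n-1}$ viewed as a finite, compactly supported Borel measure on $\Ha$, supported on the horizontal unit sphere $\mathbb{S}^{2n-1}\times\{0\}$. Two inputs are required: a verification of the Curvature Assumption \textit{(CA)} for $\sigma_{2n-1}$, and uniform-in-$k$ weighted estimates for the single averages $H_k=S[\sigma_{2n-1,k}]$ on $L^p(\Ha,|x|^\alpha)$ throughout the stated range of $\alpha$.

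For \textit{(CA)}, one analyses iterated Heisenberg convolutions $\sigma_{2n-1}^{(N)}$. The single convolution $\sigma_{2n-1}\ast\sigma_{2n-1}$ is the pushforward of the product measure on $\mathbb{S}^{2n-1}\times\mathbb{S}^{2n-1}$ under the map $(u,v)\mapsto(u+v,\tfrac12\Im(u\bar v))$. For $n\ge 2$ the domain dimension $2(2n-1)=4n-2$ exceeds the topological dimension $2n+1$ of $\Ha$, so after finitely many convolutions the resulting measure becomes absolutely continuous with a density $h$ that is $L^1$-H\"older of some positive order, precisely the content of \textit{(CA)}. The hypothesis $n\ge 2$ in our theorem enters at exactly this step; for $n=1$ the analogous dimension count fails.

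The uniform estimate for the single averages reduces by scaling to a single-operator bound. From $\sigma_{2n-1,k}=(\delta_{2^k})_*\sigma_{2n-1}$ and the fact that $\delta_{2^k}$ is a group automorphism one has $(f\ast\sigma_{2n-1,k})\circ\delta_{2^k}=(f\circ\delta_{2^k})\ast\sigma_{2n-1}$, and the change of variables $y=\delta_{2^k}x$ together with $|\delta_r x|=r|x|$ causes both sides of the weighted inequality to scale by the same power of $2^k$; hence the uniform-in-$k$ estimate is equivalent to the single-operator bound $H_1:L^p(|x|^\alpha)\to L^p(|x|^\alpha)$. Since $\sigma_{2n-1}$ is symmetric under the Heisenberg inverse (the Euclidean sphere being symmetric), $H_1$ is self-adjoint, and duality identifies $L^p(|x|^\alpha)$-boundedness for $\alpha\in(0,(2n-1)(p-1))$ with $L^{p'}(|x|^\beta)$-boundedness for $\beta=-\alpha/(p-1)\in(-(2n-1),0)$. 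It therefore suffices to establish $H_1:L^p(|x|^\alpha)\to L^p(|x|^\alpha)$ for $\alpha\in[-(2n-1),0]$ and all $p\in(1,\infty)$; this is a Stein--Weiss type bound, obtained by splitting $f$ into pieces localized near and far from the origin and combining the unweighted $L^p$-boundedness of $H_1$ (automatic since $\sigma_{2n-1}$ is a probability measure) with the fact that $\sigma_{2n-1}$ concentrates on a set of Euclidean dimension $2n-1$, which dictates the critical threshold.

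With these ingredients in hand, the two regimes of $\alpha$ are handled separately. For $0\le\alpha<(2n-1)(p-1)$, pick any $\alpha'\in(\alpha,(2n-1)(p-1))$ so that $|x|^{\alpha'}\in\mathcal{A}_p(\Ha)$ (automatic since $(2n-1)(p-1)<Q(p-1)$, the $\mathcal{A}_p$ threshold for Kor\'anyi power weights), and apply Theorem~\ref{main:suff} with $\omega=|x|^{\alpha'}$ and $s=\alpha/\alpha'\in[0,1)$. For $-(2n-1)\le\alpha<0$, apply Theorem~\ref{Lac-end} directly, since $\sigma_{2n-1}$ is supported away from the origin. The principal obstacle in this plan is the single-operator weighted bound at the negative endpoint $\alpha=-(2n-1)$: the weight $|x|^{-(2n-1)}$ is maximally singular at the origin, and establishing the bound requires a delicate Stein--Weiss analysis adapted to the non-Euclidean Heisenberg geometry of the horizontal sphere.
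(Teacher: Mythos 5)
Your high-level architecture coincides with the paper's: verify the curvature assumption for $\sigma_{2n-1}$, prove a uniform-in-$r$ weighted bound for the single averages $H_r$ on $L^p(\Ha,|x|^\alpha)$, then obtain the open range $-(2n-1)<\alpha<(2n-1)(p-1)$ from Theorem~\ref{main:suff} (taking a slightly larger power and $s<1$) and the endpoint $\alpha=-(2n-1)$ from Theorem~\ref{Lac-end}. The genuine gap is that the single-average estimate (Theorem~\ref{single-hori-Lp} in the paper) is the analytic heart of the matter and you never prove it: you reduce by scaling and duality to $H_1:L^p(|x|^\alpha)\to L^p(|x|^\alpha)$ for $\alpha\in[-(2n-1),0]$ and then merely gesture at a ``Stein--Weiss type bound'' obtained by splitting near/far from the origin and invoking the dimension $2n-1$ of the sphere. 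That sketch does not engage the critical configuration, which is not the origin itself but points $x$ for which the weight's singularity lies on the translated sphere $\{x\cdot(u,0)^{-1}\}$; this is exactly where the threshold $-(2n-1)$ arises. The paper handles it by first proving an $\mathcal{A}_1$-type lemma, $H(|y|^{\alpha})(x)\lesssim |x|^{\alpha}$ if and only if $-(2n-1)<\alpha\le 0$ (via a dyadic cap decomposition using $\sigma_{2n-1}(\mathbb{S}^{2n-1}\cap B^z_k)\lesssim 2^{-k(2n-1)}$), and then invoking the factorization theorem for positive self-adjoint operators (Jawerth \cite{JW}, see also \cite{JV}): boundedness on $L^p(w)$ holds iff $w=w_0w_1^{1-p}$ with $Hw_i\lesssim w_i$; negative $\alpha$ uses $w_0=|x|^{\alpha}$, $w_1=1$, positive $\alpha$ uses $w_0=1$, $w_1=|x|^{-\alpha/(p-1)}$. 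Crucially, at $\alpha=-(2n-1)$ the pointwise $\mathcal{A}_1$ bound fails, and the paper follows Duoandikoetxea--Vega by constructing modified weights $\omega_0,\omega_1$ containing singular factors $\big|1-|x|\big|^{-\gamma}$ on the annulus $\frac14<|x|<4$, together with the cap estimates needed to show $H\omega_i\lesssim\omega_i$. You explicitly defer this endpoint as ``the principal obstacle,'' but it is precisely the hypothesis \eqref{unoform-end} that Theorem~\ref{Lac-end} requires, so without it the case $\alpha=-(2n-1)$ (and, as written, even the interior negative range) is not established.

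A secondary issue: your justification of the curvature assumption by the dimension count $2(2n-1)>2n+1$ for $n\ge2$ is only a heuristic. Absolute continuity of an iterated convolution, and a fortiori the $L^1$ H\"older continuity of its density demanded by (CA), does not follow from comparing dimensions; the paper simply cites Lemma~3.3 of \cite{HickmanJFA}, where this is proved, and you should do the same (or supply an actual argument). The scaling reduction to $r=1$, the self-adjointness of $H_1$ coming from the symmetry of $\sigma_{2n-1}$ under the group inverse, and the deduction of the open range from Theorem~\ref{main:suff} with $\omega=|x|^{\alpha'}$ and $s=\alpha/\alpha'$ are all correct and match the paper.
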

The maximal functions $\mathcal{M}_{lac}$ and $M_{lac}$ are extensively studied objects. For an historical account, in \cite{Cowling}, Cowling studied the spherical maximal function over Kor\'anyi spherical averages and he proved that the spherical maximal function $\mathcal{M}_{full}f(x)=\sup_{r>0}|S_{r}f(x)|$ is bounded on $L^p(\Ha) $ for all $p>\frac{2n+1}{2n}=\frac{d}{d-1}.$ Motivated by the work in \cite{Lacey}, in recent times, $\mathcal{M}_{full}$ and $\mathcal{M}_{lac}$ have been thoroughly studied in \cite{PT} and in \cite{Rajula} in the context of weighted inequalities, sparse bounds and $L^p$ improving estimates. We will place our results in comparison with that of \cite{PT, Rajula} in the next subsection for a proper perspective.

Averages over the horizontal spheres were first studied by Nevo and Thangavelu in the seminal article \cite{NeT} and they proved that $M_{full}f(x)=\sup_{r>0}|H_{r}f(x)|$ is bounded on  $L^p(\Ha)$ for $p>\frac{2n-1}{2n-2}.$ Subsequently, it was proved that $M_{full}f(x)$ maps $L^p(\Ha)$ is bounded on  $L^p(\Ha)$ if and only if $p>\frac{2n}{2n-1}$ independently by Narayanan and Thangavelu in \cite{NaT} and in \cite{MS}  by M\"uller and Seeger. Recently, in the article \cite{BHRT} the authors proved that ${M}_{lac}$ is bounded from $L^p(\Ha)$ to itself for $1<p<\infty.$ They also proved very crucial sparse operator bounds and renewed the interest in the study of ${M}_{lac}$ and influenced a lot of research in this direction. These results were further sharpened in \cite{Roos}. The following section will highlight the importance of our results, namely, Theorem~\ref{main:koranyi} and Theorem~\ref{main:hori}, in comparison with existing weighted boundedness results available in \cite{Rajula, Roos}, see also \cite{PT}.

\subsection{Comparison with previously known results}
\label{comparison}
An important class of $\mathcal{A}_{p}(G)$ weights are given by the power weights, i.e., weights of the form $\omega_{\alpha}(x)=|x|^{\alpha}.$ Recall the polar decomposition on the homogeneous group $G.$ Denote $\Sigma:=\{x\in G: |x|=1\},$ then there exists a unique radon measure $\vartheta$ on $\Sigma$ such that
\begin{align}
\label{polar}
\int_{G} f\, dx=\int_{0}^{\infty} \int_{\Sigma} f(\delta_{r} y) r^{Q-1}\, d\vartheta(y)\, dr.    
\end{align}
 Using the polar decomposition \eqref{polar}, one can prove that $\omega_{\alpha}(x)=|x|^{\alpha} \in \mathcal{A}_{p}(G)$ if and only if $-Q<\alpha<Q(p-1),$ where $Q$ is the homogeneous dimension of $G$, defined earlier.

In this section, we describe how our results improve on the existing results in \cite{PT, Rajula} and \cite{Roos} in the context of power weights on the Heisenberg group. We only explain the case of $\mathcal{M}_{lac}$, as other cases can be handled similarly. Recall $d=2n+1$ and $Q=2n+2$
denote the topological and homogeneous dimension of $\Ha$, respectively. For $p\in (1, \infty),$ we say $\omega$ belongs to the Reverse-H\"older class of weights, $RH_{p}(\Ha)$, if
\begin{align}
 \frac{1}{|B|} \int_{B} \omega^p \leq C_{\omega, n}\left(\frac{1}{|B|}\int_{B} \omega\right)^{p},  
\end{align}
holds for all balls $B$ in $G$. The following theorem was proved in \cite{Rajula}.
\begin{theorem}[\cite{Rajula}]
\label{rajula-sparse}
Let $n\geq 1$ and let $\phi_{lac}$ be a real valued function such that $\frac{1}{\phi_{lac}}$ is a piecewise linear function on $[0, 1]$ whose graph connects the points $(0, 1), (\frac{d}{d+1}, \frac{d}{d+1}),$ and $(1, 0).$ Then $\mathcal{M}_{lac}$ is bounded from $L^p(\Ha, \omega)$ to itself if
$$\omega\in \mathcal{A}_{p/r}(\Ha)\cap RH_{\left(\frac{\big(\phi_{lac}(\frac{1}{r})\big)'}{p}\right)'}(\Ha),$$
where $1<r<p<\big(\phi_{lac}(\frac{1}{r})\big)'.$
\end{theorem}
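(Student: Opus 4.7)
The plan is to deduce the weighted estimate from the sparse domination paradigm. The curve $1/\phi_{lac}$ traces out precisely those pairs $(1/p,1/q)$ for which the single-scale Kor\'anyi spherical average $f \mapsto f \ast \sigma$ maps $L^p(\Ha)$ into $L^q(\Ha)$, the corner $(d/(d+1), d/(d+1))$ reflecting the sharp Stein--Tomas type endpoint for the codimension-one surface $\Sigma_K \subset \R^{2n+1}$. The first step is therefore to establish these $L^p$-improving bounds at unit scale using the nonvanishing rotational curvature of $\Sigma_K$ (smooth away from the equator $\{\|u\|=1, s=0\}$), and then rescale to obtain, uniformly in $k \in \Z$, the estimate $\|S_k\|_{L^p \to L^q} \lesssim 2^{-k Q(1/p - 1/q)}$.

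In the second step, I would upgrade these single-scale bounds to an $(r,s)$-sparse domination for the lacunary operator: for $(1/r, 1/s)$ strictly below the curve $1/\phi_{lac}$, and any compactly supported $f, g$, one produces a sparse family $\mathcal{S}$ of Kor\'anyi balls with
\[
|\langle \mathcal{M}_{lac}f, g\rangle| \lesssim \sum_{Q \in \mathcal{S}} |Q|\, \langle f \rangle_{Q, r}\, \langle g \rangle_{Q, s}.
\]
The argument proceeds by a stopping-time corona decomposition at the dyadic (lacunary) scales, in the spirit of the Euclidean work of Lacey and the Heisenberg adaptation in \cite{BHRT}: at each stopping cube one dominates the principal part through the local $L^p$-improving bound from Step~1, pushes the remainder to the children, and uses the lacunary structure to sum the errors as a convergent geometric series in the scale difference.

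The third step invokes the standard principle (of Bernicot--Frey--Petermichl type, extended to homogeneous spaces) that converts an $(r,s)$-sparse bound into a quantitative weighted $L^p$ estimate: the resulting inequality holds for every $\omega \in \mathcal{A}_{p/r}(\Ha) \cap RH_{(s/p)'}(\Ha)$ whenever $r < p < s'$. Choosing $s = (\phi_{lac}(1/r))'$ and letting $r$ range over $(1, p)$ reproduces exactly the hypothesis of the theorem. The main obstacle is Step~2, and within it the sparse bound at the sharp corner of the curve $\phi_{lac}$: because $\Sigma_K$ degenerates at its equator, the $L^p$-improving endpoint requires a delicate splitting of $\sigma$ into pieces localized near and away from the degenerate set, combined with oscillatory integral estimates adapted to the non-Euclidean rotational structure governing $\Sigma_K$; once this input is secured, the passage to the sparse form and the weighted conclusion are essentially mechanical.
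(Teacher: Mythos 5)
A point of orientation first: Theorem~\ref{rajula-sparse} is not proved in this paper at all — it is quoted from \cite{Rajula} solely to compare ranges of power weights (the paper's own Theorems~\ref{main:koranyi} and \ref{main:hori} are proved by a completely different route: an $\mathcal{A}_1$-type factorization plus the Christ-style bootstrap of Theorem~\ref{main:suff}, with no sparse domination anywhere). So there is no in-paper proof to measure your sketch against; what you have outlined is, at the level of architecture, the route actually taken in \cite{Rajula} (building on \cite{Lacey} and \cite{BHRT}): single-scale $L^p$-improving estimates, a stopping-time sparse domination for the lacunary operator, and the standard sparse-to-weighted principle.

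As a proof, however, there are concrete gaps. First, the analytic core is misattributed: the Kor\'anyi sphere is smooth everywhere and its Euclidean curvature degenerates at the \emph{poles}, not the equator — compare the discussion inside the proof of Lemma~\ref{A1sph} here, where near the north pole the sphere is the quartically flat graph $u_{2n+1}=1+\|\underline{u}\|^{4}$ while near the equator it is a non-degenerate quadratic graph. More importantly, ``non-vanishing rotational curvature'' of the Euclidean hypersurface does not by itself give the $L^{(d+1)/d}\to L^{d+1}$-type improving bounds for the \emph{group} convolution $f\mapsto f\ast\sigma$ on $\Ha$; controlling the contribution of the flat points through the non-commutative structure is precisely the hard content of \cite{Rajula} (see also \cite{Roos}), and your Step~1/Step~2 assume it. In addition, for the lacunary maximal operator the sparse region is not read off from single-average improving bounds alone: to sum the scales in the corona argument one also needs a continuity (smoothing in the radius) estimate, which your sketch does not mention. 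Second, the exponent bookkeeping in Step~3 is internally inconsistent: for an $(r,s)$-sparse form $\sum_{Q}|Q|\langle f\rangle_{Q,r}\langle g\rangle_{Q,s}$ the standard weighted consequence is boundedness on $L^{p}(\omega)$ for $r<p<s'$ with $\omega\in\mathcal{A}_{p/r}(\Ha)\cap RH_{(s'/p)'}(\Ha)$, so to recover the hypothesis of Theorem~\ref{rajula-sparse} you must take $s=\phi_{lac}(1/r)$; your choice $s=(\phi_{lac}(1/r))'$, combined with the class $RH_{(s/p)'}$ you state, forces the range $p<\phi_{lac}(1/r)$ rather than the range $p<(\phi_{lac}(1/r))'$ in the statement. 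These index corrections are easy, but the improving/sparse estimates for the Kor\'anyi sphere themselves are the substance of the cited theorem and cannot be dismissed as mechanical.
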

Next, we compute the range of $\alpha$ provided by the above \Cref{rajula-sparse} when applied to power weight $\omega_{\alpha} = |x|^{\alpha}$. Thus, $\alpha$ should be such that it satisfy
\begin{equation}
\label{power-rhi}
\omega_{\alpha} =|x|^\alpha\in \mathcal{A}_{p/r}(\Ha)\cap RH_{\left(\frac{\big(\phi_{lac}(\frac{1}{r})\big)'}{p}\right)'}(\Ha).  
\end{equation}
According to the Theorem~\ref{rajula-sparse}, the function $\frac{1}{\phi_{lac}}$ is the following:
\begin{align*}
\frac{1}{\phi_{lac}(t)}=\begin{cases} 1-\frac{t}{d}, \,\,0<t\leq \frac{d}{d+1},\\
d(1-t),\,\, \frac{d}{d+1}\leq t<1.      
\end{cases}    
\end{align*}
Its H\"older conjugate is given by
\begin{align*}
\frac{1}{(\phi_{lac}(t))'}=\begin{cases}\frac{t}{d}, \,\,0<t\leq \frac{d}{d+1},\\
1-d(1-t),\,\, \frac{d}{d+1}\leq t<1.      
\end{cases}    
\end{align*}
We take $p$ such that $\frac{d+1}{d}<p<\infty,$ that is, $\frac{1}{p}<\frac{d}{d+1}$ and choose $r$ such that $\frac{1}{r}=\frac{d}{d+1}-\epsilon,$ then
$$\frac{1}{(\phi_{lac}(\frac{1}{r}))'}=\frac{d-(d+1)\epsilon}{d(d+1)},\,\,\,\  \frac{1}{r}=\frac{d-(d+1)\epsilon}{d+1}.$$
The intersection of $\mathcal{A}_{p}(\Ha)$ and $RH_{p}(\Ha)$ weights can be seen as
\begin{align}
\label{rhi}
\omega\in \mathcal{A}_{p/r}(\Ha)\cap RH_{\big(\frac{s}{p}\big)'}(\Ha) \iff  \omega^{\big(\frac{s}{p}\big)'}\in \mathcal{A}_{\big(\frac{s}{p}\big)'\big(\frac{p}{r}-1\big)+1}(\Ha).    
\end{align}
See equation (12) in \cite{BC} for the above. Using \eqref{power-rhi} and \eqref{rhi}, we further deduce
\begin{align}
|x|^\alpha\in \mathcal{A}_{p/r}(\Ha) & \cap RH_{\big(\frac{(\phi_{lac}(1/r))'}{p}\big)'} (\Ha) \\
& \iff |x|^{\alpha \frac{d(d+1)}{d(d+1)-p(d-(d+1)\epsilon)}}\in \mathcal{A}_{\frac{d(d+1)}{d(d+1)-p(d-(d+1)\epsilon)}\left(p\frac{d-(d+1)\epsilon}{d+1}-1\right)+1}(\Ha).\label{inter}    
\end{align}
Recall that $|x|^\beta\in \mathcal{A}_p(\Ha)\iff -Q<\beta<Q(p-1)$. Thus, \eqref{inter} holds true if and only if 
\begin{align}
-Q<\alpha \  \frac{d(d+1)}{d(d+1)-p(d-(d+1)\epsilon)}<Q \,  \frac{d(d+1)}{d(d+1)-p(d-(d+1)\epsilon)}\left(p\frac{d-(d+1)\epsilon}{d+1}-1\right).  
\end{align}
Subsequently, we focus on $\alpha\geq 0,$ so the above implies
\begin{align}
\alpha<Q\left(p\frac{d-(d+1)\epsilon}{d+1}-1\right).
\label{pos-alpha}
\end{align}
Further, the condition $1<r<p<\phi_{lac}(1/r)'$ implies
\begin{align}
\frac{d+1}{d-(d+1)\epsilon}<p< \frac{d(d+1)}{d-(d+1)\epsilon}\iff d\left(\frac{1}{d+1}-\frac{1}{p}\right)<\epsilon<\left(\frac{d}{d+1}-\frac{1}{p}\right).
\end{align}
For $\epsilon$ to be positive we must have
$p>(d+1)$ and taking $\epsilon \to \left(d\left(\frac{1}{d+1}-\frac{1}{p}\right)\right)^{-}$ in \eqref{pos-alpha} we get
$$\alpha<Q(d-1).$$
On the other hand, our main result Theorem~\ref{main:koranyi} implies that
$$\int_{\mathbb{H}^n} |\mathcal{M}_{lac}f(x)|^p |x|^{\alpha}\, dx\leq C_{n, \alpha} \int_{\mathbb{H}^n} |f(x)|^p |x|^{\alpha}$$
holds for all $\alpha$ as $-2n\leq \alpha< 2n(p-1).$ Therefore, we obtain a strict improvement over the previously known results in the context of power weights. For other values of $p$ also one can make similar observations following the above line of ideas.

We close this section by a standard weighted estimate for the Littlewood-Paley square function on Homogeneous groups which will be useful for our purpose. Let $\eta$ be a smooth compactly supported function supported in annulus $B(0, 1)\setminus B(0, 1/2)$ which satisfies $\int_{G} \eta=1,$ $\eta(x)=\eta(x^{-1})$ and $\eta\geq 0$ on $G$. Define
\[\phi_{j}(x) : =2^{-(j-1)Q}\eta(\delta_{2^{-(j-1)}}(x))-2^{-jQ}\eta(\delta_{2^{-j}}(x)), \ \ x \in G.\]
The function $\eta$ can be chosen appropriately so that $f=\sum_{j \in \Z} f\ast \phi_{j}$ holds for all $f\in C_{c}(G).$ The following weighted estimate was proved in \cite{Sato2013}.
\begin{lemma}[Lemma~6, \cite{Sato2013}]
\label{square-lemma}
Let $1<p<\infty$ and $\omega\in \mathcal{A}_{p}(G).$ Then,
\begin{align}
\bigg\|\left(\sum_{j \in \Z} \big|f\ast \phi_{j  }\big|^2\right)^{\frac{1}{2}}\bigg\|_{L^p(G,\, \omega)}\leq C_{p, \omega} \big\|f\big\|_{L^p(G,\, \omega)}.
\end{align}
\end{lemma}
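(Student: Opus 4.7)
The plan is to establish the estimate via vector-valued Calderón--Zygmund theory on the space of homogeneous type $(G, d_L, dx)$. I would view the square function as the norm of a vector-valued convolution operator: define $T \colon f \mapsto (f \ast \phi_j)_{j \in \Z}$, sending scalar functions on $G$ to $\ell^2(\Z)$-valued functions. The lemma then reduces to proving that $T$ is bounded from $L^p(G, \omega)$ to $L^p_{\ell^2}(G, \omega)$ for every $1 < p < \infty$ and $\omega \in \mathcal{A}_p(G)$.

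First I would establish the unweighted $L^2$ estimate $\|Tf\|_{L^2(G;\ell^2)} \le C\|f\|_2$. The construction gives $\phi_j = \eta_{j-1} - \eta_j$, where $\eta_j(x) = 2^{-jQ}\eta(\delta_{2^{-j}} x)$ is an $L^1$-normalized dilate of $\eta$; in particular $\int_G \phi_j = 0$ and $\phi_j$ is supported in a ball of radius comparable to $2^j$. Almost-orthogonality estimates of the form $\|\phi_j \ast \phi_k\|_1 \lesssim 2^{-\varepsilon|j-k|}$, derived by exploiting the mean-zero condition together with the Hölder regularity of $\eta$ via a change of variables governed by the dilations $\delta_r$, then deliver the $L^2$ bound through a Cotlar--Stein argument.

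The main technical step is verifying the vector-valued Hörmander smoothness condition on the kernel $K(x) := (\phi_j(x))_{j \in \Z}$ taking values in $\ell^2(\Z)$. Writing $\phi_j = \eta_{j-1} - \eta_j$, I would split the $\ell^2$-norm of the kernel difference into two regimes according to whether $2^j$ dominates the displacement scale $|h|$. When $2^j \gtrsim |h|$, the Hölder regularity of each $\eta_j$ supplies a factor of the form $(|h|/2^j)^{\beta}$ for some $\beta > 0$ that is summable in $j$; when $2^j \ll |h|$, the cancellation $\int_G \phi_j = 0$ together with the size of $\eta_j$ at scale $2^j$ yields a geometric gain. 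Summing both estimates produces the required uniform Hörmander bound, and a symmetric argument handles the left-translation regularity needed for the non-commutative setting.

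With $L^2$ boundedness and the vector-valued Hörmander condition in hand, the standard vector-valued Calderón--Zygmund theorem on spaces of homogeneous type, combined with Muckenhoupt weight theory for $(G, d_L, dx)$, extends $T$ to a bounded operator from $L^p(G, \omega)$ into $L^p_{\ell^2}(G, \omega)$ for every $1 < p < \infty$ and $\omega \in \mathcal{A}_p(G)$. The main obstacle I anticipate is organizing the Hörmander computation so that the $\ell^2$-summation in $j$ is uniform in $h$; this requires carefully tracking how the regularity and size estimates for $\eta_j$ scale with $j$ under the non-isotropic dilations, so that in both regimes the resulting geometric series converges with a constant independent of $h$.
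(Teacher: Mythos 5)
Your proposal is sound, but note that the paper does not prove this statement at all: it is imported verbatim as Lemma~6 of \cite{Sato2013}, so there is no in-paper argument to compare against. What you outline — treating $f\mapsto (f\ast\phi_j)_{j\in\Z}$ as an $\ell^2(\Z)$-valued convolution operator, getting the unweighted $L^2$ bound from almost-orthogonality ($\|\tilde\phi_j\ast\phi_k\|_1\lesssim 2^{-\varepsilon|j-k|}$ via the cancellation of one factor and the dilation-scaled H\"older regularity of the other, plus Cotlar--Stein and a Khintchine/randomization step to pass to the square function), then verifying a vector-valued H\"ormander condition and invoking weighted Calder\'on--Zygmund theory on the space of homogeneous type $(G,d_L,dx)$ with $\mathcal{A}_p$ weights — is exactly the standard route by which such weighted Littlewood--Paley estimates are established on homogeneous groups, and it is in the spirit of Sato's framework. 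One precision point: in the H\"ormander integral $\int_{|x|\geq C|h|}\bigl(\sum_j|\phi_j(xh^{-1})-\phi_j(x)|^2\bigr)^{1/2}dx$, the regime $2^j\ll|h|$ is not handled by the cancellation $\int_G\phi_j=0$; rather, both terms vanish identically on that region because $\phi_j$ is supported in the annulus $\{|x|\simeq 2^j\}$ and the quasi-triangle inequality keeps $xh^{-1}$ away from that annulus when $C$ is large. Cancellation is what you need in the $L^2$/almost-orthogonality step, not here. It is also worth recording that for fixed $x$ only boundedly many $j$ give a nonzero contribution (again by the annular supports), which is what makes the $\ell^2$-sum in $j$ comparable to the $\ell^1$-sum and yields the uniform bound $\sum_{2^j\gtrsim|h|}(|h|/2^j)^{\beta}\lesssim 1$ after integrating the smoothness estimate. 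With those adjustments your argument closes, and the symmetric left-regularity estimate you mention is indeed required for convolution operators on a noncommutative $G$.
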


In the following section~\ref{sec-suff} we provide the proof of Theorem~\ref{main:suff} and Theorem~\ref{Lac-end}. The subsequent section~\ref{Heisenberg-max} contains several preliminary lemmas and the proofs of Theorem~\ref{main:koranyi} and Theorem~\ref{main:hori}.

\section{Proof of main results}
\label{sec-suff}
This section is dedicated to prove our main results. We first prove the Theorem~\ref{main:suff} and as an application we deduce Theorem~\ref{main:koranyi} and Theorem~\ref{main:hori}. A key ingredient in our proof of Theorem~\ref{main:suff} is the $L^p$ boundedness of the lacunary maximal functions, Theorem~\ref{Hickman-main}, proved by Sheri--Hickman--Wright in \cite{HickmanJFA}.

\begin{proof}[Proof of Theorem~\ref{main:suff}]

The proof relies on a nice application of the Littlewood-Paley theory. Let $\eta$ be a smooth compactly supported function supported in $B(0, 1)\setminus B(0, 1/2)$ and satisfies $\int_{G} \eta=1,$ $\eta(x)=\eta(x^{-1})$ and $\eta\geq 0$ on $G$. Define
\[\phi_{j}(x)=2^{-(j-1)Q}\eta(\delta_{2^{-(j-1)}}(x))-2^{-jQ}\eta(\delta_{2^{-j}}(x)).\]
Moreover, by the choice of $\eta$, we have that $f=\sum_{j \in \Z} f\ast \phi_{j}$ holds for all $f\in C_{c}(G).$ Therefore, using this decomposition of the function $f,$ we can write that

\begin{align}
\label{dyadic-piece}
\mathcal{S}_{lac}[\mu]f(x)\leq \sum_{j\in \mathbb{Z}} \sup_{k\in \mathbb{Z}} |f\ast \phi_{k+j}\ast \mu_{k}|.    
\end{align}

Let $1<p<2,$ also assume that $0\leq s<1$. Moreover, for $L\in \mathbb{N},$ let $\Upsilon(L)$ be the smallest constant such that the following inequality holds true:
\begin{align}
\label{Christ}
\big\|\sup_{k\in \mathbb{Z}: |k|\leq L} |S[\mu_k]f|\big\|_{L^p(\omega^s)}\leq \Upsilon(L)\, \big\|f\big\|_{L^p(\omega^s)}. 
\end{align}

The existence of such a finite $\Upsilon(L)$ is guaranteed by the subsequent observations. When $s=1,$ using our hypothesis \eqref{uniform} we obtain that 
\begin{align}\label{finite-1}\big\|   \sup_{k\in \mathbb{Z}: |k|\leq L} | S[\mu_k]f |  \big \|^{p}_{L^p(\omega)}\leq \sum_{|k|\leq L} \big \|S[\mu_k]f\|^{p}_{L^p(\omega)}\leq C L \, \big\|f\big\|_{L^p(\omega)}^{p}.\end{align}
Similarly, for $s=0$, simply have the fact that $S[\mu_k]$ maps $L^p(G)$ to itself with constant independent of $k$ for $1\leq p\leq \infty$ to conclude that
\begin{align}
\label{finite-2}
\big\|\sup_{k\in \mathbb{Z}: |k|\leq L} S[\mu_k]f\big \|^{p}_{L^p}\leq \sum_{|k|\leq L} \big \|S[\mu_k]f\|^{p}_{L^p}\leq C L \, \big\|f\big\|_{L^p}^{p}.
\end{align}
Stein--Weiss interpolation between \eqref{finite-1} and \eqref{finite-2} ensures the existence of $\Upsilon(L)$ in \eqref{Christ} for $0\leq s\leq 1.$

The rest of the proof is dedicated to conclude that $\Upsilon(L)\leq C$ uniformly in $L\in \mathbb{N}.$ We follow an argument of by M. Christ, see \cite{JV}. The uniform estimate \eqref{uniform} together with \eqref{Christ} implies the following vector-valued inequality
\begin{align}
\label{req-vector-valued-1}
\big\|\sup_{k\in \mathbb{Z}: |k|\leq L} S[\mu_k] (h_{k}\ast \phi_{k+j})\|_{L^p(\omega^s)}\leq \Upsilon(L)\, \big\|\sup_{k\in \mathbb{Z}: |k|\leq L} |h_{k}|\big\|_{L^p(\omega^s)}.  
\end{align}

To see the above inequality, we argue as follows. First, we observe
\begin{equation}\label{Maximal fn domination}
    \begin{split}
       ||h_{k}|  \ast \phi_{k+j} (x)| & \leq \int_{G} |h_{k}(x \, y^{-1})| | \phi_{k+j}(y) | dy \\
       & \lesssim 2^{-(k+j)Q} \int_{B(0, C 2^{(k+j)})} |h_{k}(x \, y^{-1})| dy \\
       & \overset{y \rightarrow y^{-1}x}{\lesssim} 2^{-(k+j)Q} \int_{B(x, C 2^{(k+j)})} |h_{k}(y)| dy \\
       & \leq \mathfrak{M}(|h_{k}|) (x).
    \end{split}
\end{equation}

Now, using \eqref{Christ} and positivity of $S[\mu_k]$,  we obtain
\begin{align*}
\big\|\sup_{k\in \mathbb{Z}: |k|\leq L}  \left|  S[\mu_k] (h_{k}\ast \phi_{k+j})  \right|   \|_{L^p(\omega^s)}&\lesssim \big\|\sup_{k\in \mathbb{Z}: |k|\leq L} S[\mu_k] \big( \mathfrak{M}(\sup_{|k|\leq L}  \left| h_{k}  \right| )\big)\|_{L^p(\omega^s)}\\
&\overset{\eqref{Christ}}{\lesssim} \Upsilon(L)\, \big\| \mathfrak{M}(\sup_{|k|\leq L}  \left| h_{k}  \right| )\|_{L^p(\omega^s)}\lesssim \Upsilon(L)\big\|\sup_{|k|\leq L}  \left| h_{k}  \right|\|_{L^p(\omega^s)},
\end{align*}
where the last inequality follows from the boundedness of $\mathfrak{M}$ on $L^p(\omega^s)$ to itself, and this in turn requires that $\omega^s\in \mathcal{A}_{p}(G),$ which is a simple consequence of $\omega\in \mathcal{A}_{p}(G)$ and H\"older's inequality. Next, uniform estimate \eqref{uniform} leads to one more vector-valued inequality
\begin{align*}
&\bigg\|\bigg(\sum_{k\in \mathbb{Z}: |k|\leq L} |S[\mu_k] (h_{k}\ast \phi_{k+j})|^{p}\bigg)^{\frac{1}{p}}\bigg\|_{L^p(\omega)}^{p}\\
& = \sum_{k\in \mathbb{Z}: |k|\leq L} \big\|S[\mu_k] (h_{k}\ast \phi_{k+j})\big\|_{L^p(\omega)}^{p}\\
&\overset{\eqref{uniform}}{\leq} C\, \sum_{k\in \mathbb{Z}: |k|\leq L} \big\|  h_{k}\ast \phi_{k+j} \big\|_{L^p(\omega)}^{p}   \overset{\eqref{Maximal fn domination}}{\lesssim} \bigg\|\bigg(\sum_{k\in \mathbb{Z}: |k|\leq L} |h_{k}|^{p}\bigg)^{\frac{1}{p}}\bigg\|_{L^p(\omega)}^{p}.    
\end{align*}

So, we obtain
\begin{align}
\label{req-vector-valued-2}
&\bigg\|\bigg(\sum_{k\in \mathbb{Z}: |k|\leq L} |S[\mu_k] (h_{k}\ast \phi_{k+j})|^{p}\bigg)^{\frac{1}{p}}\bigg\|_{L^p(\omega)}\lesssim C \bigg\|\bigg(\sum_{k\in \mathbb{Z}: |k|\leq L} |h_{k}|^{p}\bigg)^{\frac{1}{p}}\bigg\|_{L^p(\omega)}.
\end{align}
The interpolation of \eqref{req-vector-valued-1} and \eqref{req-vector-valued-2} implies that
\begin{align}
\label{req-vector-valued-3}
&\bigg\|\bigg(\sum_{k\in \mathbb{Z}: |k|\leq L} |S[\mu_k] (h_{k}\ast \phi_{k+j})|^{2}\bigg)^{\frac{1}{2}}\bigg\|_{L^p(\omega^r)}\lesssim C \Upsilon(L)^{1-\frac{p}{2}} \bigg\|\bigg(\sum_{k\in \mathbb{Z}: |k|\leq L} |h_{k}|^{2}\bigg)^{\frac{1}{2}}\bigg\|_{L^p(\omega^r)}.
\end{align}
for some $s<r<1$. At this stage, we wish to use the \eqref{req-vector-valued-3} with the functions $h_{k}=f\ast \psi_{j+k},$ where $\psi$ is a function, similar to $\phi,$ such that it satisfies $\psi\ast \phi=\phi,$ and then we  have
\begin{align}
\nonumber&\bigg\|\sup_{k\in \mathbb{Z}: |k|\leq L} |S[\mu_k] (f\ast \phi_{k+j})|\bigg\|_{L^p(\omega^r)}\leq \bigg\|\bigg(\sum_{k\in \mathbb{Z}: |k|\leq L} |S[\mu_k] (f\ast \psi_{k+j}\ast \phi_{k+j})|^{2}\bigg)^{\frac{1}{2}}\bigg\|_{L^p(\omega^r)}\\
&\overset{\eqref{req-vector-valued-3}}{ \lesssim } C \Upsilon(L)^{1-\frac{p}{2}} \bigg\|\bigg(\sum_{k\in \mathbb{Z}: |k|\leq L} |f\ast \psi_{k+j}|^{2}\bigg)^{\frac{1}{2}}\bigg\|_{L^p(\omega^r)}\lesssim C \Upsilon(L)^{1-\frac{p}{2}} \|f\|_{L^p(\omega^r)}, \label{bc}
\end{align}
where the last inequality follows from the  boundedness of square function on $L^{p}(\omega^{r})$, which in turn is consequence of $w \in \mathcal{A}_{p}(G)$, $0<r <1$ and Lemma~\ref{square-lemma}.

Recall that for $1<q< \infty$, one has the unweighted inequality
\begin{equation}\label{Lacunary-piece}
    \bigg\|\sup_{k\in \mathbb{Z}} |S[\mu_k] (f\ast \phi_{k+j})|\bigg\|_{L^q}\leq 2^{-|j|\delta(q)} \|f\|_{L^q}.
\end{equation}
For $1 < q \leq 2$, it follows immediately from Proposition 2.2 of \cite{HickmanJFA} after applying union bound argument. Then, interpolating this bound for small enough $q$ with the  corresponding $L^{\infty}$-bound where operator norm is $\lesssim 1$, yields \eqref{Lacunary-piece}.

Thus, interpolation between the estimate \eqref{bc} and the corresponding unweighted estimate \eqref{Lacunary-piece}, for the same $1 < p < 2$ implies that for any $0<s<1,$ there exists $\theta=\theta_{s}\in (0, 1)$, $\delta^{\prime}(p) >0$ such that
\begin{equation}
\label{req-almost}
\bigg\|\sup_{k\in \mathbb{Z}: |k|\leq L} |S[\mu_k] (f\ast \phi_{k+j})|\bigg\|_{L^p(\omega^s)}\leq 2^{-|j|\delta'(p)} \Upsilon(L)^{1-{\theta}} \|f\|_{L^p(\omega^s)}.   
\end{equation}
Now, summing the estimate \eqref{req-almost} in $j$, we obtain that  
\begin{equation}
\label{req-final}
\bigg\|\sup_{k\in \mathbb{Z}: |k|\leq L} |S[\mu_k] (f)|\bigg\|_{L^p(\omega^s)}\lesssim \Upsilon(L)^{1-{\theta}} \|f\|_{L^p(\omega^s)}.
\end{equation}
Since we assumed that $\Upsilon(L)$ is the smallest constant in \eqref{Christ}, the above estimate \eqref{req-final} implies that $\Upsilon(L)\lesssim \Upsilon(L)^{1-{\theta}},$ concluding that $\Upsilon(L)\lesssim 1$. This completes the proof in the case $1<p<2.$ 

The proof in the case $2\leq p<\infty$ is simpler in nature; however, we provide it for the sake of completeness. For any $\omega\in \mathcal{A}_{p}(G),$ observe that using a union bound argument and \eqref{uniform} we get
\begin{align}
\bigg\|\sup_{k\in \mathbb{Z}} |f\ast \phi_{k+j}\ast \mu_{k}|\bigg\|_{L^p(\omega)}^{p}
&\leq \bigg\| \bigg(\sum_{k\in \mathbb{Z}} |f\ast \phi_{k+j}\ast \mu_{k}|^{p} \bigg)^{\frac{1}{p}}\bigg\|_{L^p(\omega)}\nonumber\\
& = \sum_{k\in \mathbb{Z}} \int_{G} |f\ast \phi_{k+j}\ast \mu_{k}|^{p}\, \omega\, dx\nonumber\\
& \overset{\eqref{uniform}}{\lesssim} C \sum_{k\in \mathbb{Z}} \int_{G} |f\ast \phi_{k+j}|^{p}\, \omega\, dx \nonumber \\
& \leq C \int_{G} \left( \sum_{k\in \mathbb{Z}}|f\ast \phi_{k+j}|^{2}\right)^{\frac{p}{2}} \omega\, dx\ \ \ \text{(since}\, 2\leq p<\infty),\nonumber\\
&\lesssim C \int_{G} |f|^p\, \omega\,  dx \label{easy1},
\end{align}
where the last inequality follows from weighted boundedness of the square-function in Lemma~\ref{square-lemma}. Interpolating \eqref{easy1} and the unweighted estimate \eqref{Lacunary-piece} we obtain that
\begin{equation}\label{easy2}
    \bigg\|\sup_{k\in \mathbb{Z}} |S[\mu_k] (f\ast \phi_{k+j})|\bigg\|_{L^p(\omega^s)}\leq 2^{-|j|\delta_{s}(p)} \|f\|_{L^p(\omega^s)},
\end{equation}
for all $0\leq s<1$ for some $\delta_{s}(p)>0.$ Now summing \eqref{easy2} in $j$ the proof follows from \eqref{dyadic-piece}. This completes the proof for $2\leq p<\infty.$ 
\end{proof}

Now we prove the Theorem~\ref{Lac-end}.

\begin{proof}[Proof of Theorem~\ref{Lac-end}]
Since $\mu$ is compactly supported, we may assume that $supp(\mu)$ is contained in $B(0, \kappa)$ for some $\kappa>0.$ Let $R_{j}$ denotes the annulus $\{x\in G: 2^{j}<|x|\leq 2^{j+1}\}$ and we decompose the left hand side of \eqref{lac-end} as following
\begin{align*}
\int_{G}|S_{lac}[\mu]f(x)|^p |x|^{\alpha}\, dx&\leq \sum_{j\in \mathbb{Z}} \int_{R_{j}} \sup_{ k\in \Z}\big|S[\mu_k]f(x)\big|^p\,\, |x|^{\alpha}\, dx=\sum_{j\in \mathbb{Z}} \mathcal{I}_{j},
\end{align*}
where 
$$\mathcal{I}_{j}:=\int_{R_{j}} \sup_{ k\in \Z}\big|S[\mu_k]f(x)\big|^p\,\, |x|^{\alpha}\, dx.$$
We decompose the function $f$ as $f=g_{j}+h_{j}$, where $g_{j}=f \chi_{\{|x|\leq 2^{j+1}\}}$, and using this we
write
\begin{align}
\mathcal{I}_{j}\leq\int_{R_{j}} \sup_{ k\in \Z}\big|S[\mu_k]g_{j}(x)\big|^p\,\, |x|^{\alpha}\, dx+ \int_{R_{j}} \sup_{ k\in \Z}\big|S[\mu_k]h_{j}(x)\big|^p\,\, |x|^{\alpha}\, dx=\mathcal{I}_{j}^{1}+\mathcal{I}_{j}^2.  
\end{align}
For $\mathcal{I}_{j}^{1},$ using the unweighted boundedness of $S_{lac}[\mu]$, we obtain 
\begin{align}
\notag &\sum_{j\in \mathbb{Z}}\mathcal{I}_{j}^{1}\\
\notag= &\sum_{j\in \mathbb{Z}} \int_{R_{j}} \sup_{ k\in \Z}\big|S[\mu_k]g_{j}(x)\big|^p\,\, |x|^{\alpha}\, dx\\
\notag\lesssim  &\sum_{j\in \mathbb{Z}} 2^{j\alpha} \int_{R_{j}}  \sup_{ k\in \Z}\big|S[\mu_k]g_{j}(x)\big|^p\, dx\lesssim \sum_{j\in \mathbb{Z}} 2^{j\alpha} \int_{G}  \sup_{ k\in \Z}\big|S[\mu_k]g_{j}(x)\big|^p\, dx\\
\notag\lesssim  &\sum_{j\in \mathbb{Z}} 2^{j\alpha} \int_{|x|\leq 2^{j+1}}  |f(x)|^p\, dx\lesssim \sum_{j\in \mathbb{Z}} 2^{j\alpha} \sum_{\ell=-\infty}^{j}  \int_{2^{\ell}\leq |x|\leq 2^{\ell+1}}  |f(x)|^p\, dx\\
\notag &\lesssim \sum_{j\in \mathbb{Z}} 2^{j\alpha} \sum_{\ell=-\infty}^{j}  \int_{2^{\ell}\leq |x|\leq 2^{\ell+1}} 2^{-\ell \alpha}  |f(x)|^p\,|x|^{\alpha} dx\\
\notag &\lesssim \sum_{\ell=-\infty}^{\infty} 2^{-\ell \alpha} \sum_{j=\ell}^{\infty} 2^{j\alpha}   \int_{2^{\ell}\leq |x|\leq 2^{\ell+1}}  |f(x)|^p\,|x|^{\alpha}\,dx\\
\label{end-eq-1} &\overset{(\alpha<0)}{\lesssim} \sum_{\ell=-\infty}^{\infty}\int_{2^{\ell}\leq |x|\leq 2^{\ell+1}}  |f(x)|^p\,|x|^{\alpha}\,dx \lesssim \int_{G}  |f(x)|^p\,|x|^{\alpha}\,dx. 
\end{align}
To estimate $\mathcal{I}_{j}^2$ we make couple of observations. For $x\in R_{j}, y\in supp(\mu)$ and $k<j$ observe that $|x. \delta_{2^k} y^{-1}|\leq 2^{j+1}+2^{k}\lesssim
_{\kappa} 2^{j+1}$ as $k<j,$ hence $S[\mu_k]h_{j}(x)=0$ for $k<j.$ Also, for $x\in R_{j}, y\in supp(\mu)$ for a fixed $k>j$ we have $|x. \delta_{2^k} y^{-1}|\lesssim_{\kappa} 2^{j+1}+2^k\lesssim C_{\kappa} 2^{k}.$ Similarly, there exists a $c_{\kappa}$ such that $|x. \delta_{2^k} y^{-1}|\geq c_{\kappa} 2^{k}.$ Considering the above discussion we can write that $S[\mu_k](h_{j})(x)=S[\mu_k](f\chi_{\{c_{\kappa} 2^{k}\leq |x|\leq C_{\kappa}2^{k}\}})(x).$ Consequently,
\begin{align}
\notag
\notag &\sum_{j\in \mathbb{Z}}\mathcal{I}_{j}^{2}\\
\notag= &\sum_{j\in \mathbb{Z}} \int_{R_{j}} \sup_{ k\in \Z}\big|S[\mu_k]h_{j}(x)\big|^p\,\, |x|^{\alpha}\, dx\\
\notag\lesssim &\sum_{j\in \mathbb{Z}} 2^{j\alpha} \int_{R_{j}} \sum_{ k=j}^{\infty} \big|S[\mu_k](f\chi_{\{c_{\kappa} 2^{k}\leq |x|\leq C_{\kappa}2^{k}\}})(x)\big|^p\,dx\\
\notag \lesssim &\sum_{k\in \mathbb{Z}} 2^{j\alpha} \sum_{j=-\infty}^{k} \int_{R_{j}}\big|S[\mu_k](f\chi_{\{c_{\kappa} 2^{k}\leq |x|\leq C_{\kappa} 2^{k}\}})(x)\big|^p\,dx\\
\notag \lesssim &\sum_{k\in \mathbb{Z}}  \sum_{j=-\infty}^{k} \int_{R_{j}}\big|S[\mu_k](f\chi_{\{c_{\kappa} 2^{k}\leq |x|\leq C_{\kappa} 2^{k}\}})(x)\big|^p\, |x|^{\alpha}\, dx\\
\notag \lesssim &\sum_{k\in \mathbb{Z}} \int_{|x|\lesssim 2^{k}}\big|S[\mu_k](f\chi_{\{c_{\kappa} 2^{k}\leq |x|\leq C_{\kappa} 2^{k}\}})(x)\big|^p\, |x|^{\alpha}\, dx\\
 \overset{\eqref{unoform-end}}{\lesssim} &\sum_{k\in \mathbb{Z}} \int_{c_{\kappa} 2^{k}\leq |x|\leq C_{\kappa} 2^{k}}|f(x)|^p\,|x|^{\alpha}\,dx\lesssim \int_{G}|f(x)|^p\,|x|^{\alpha}\,dx.\label{end-eq2}
\end{align}
Combining \eqref{end-eq-1} and \eqref{end-eq2} we obtain that 
\begin{align*}
&\int_{G}|S_{lac}[\mu]f(x)|^p |x|^{\alpha}\, dx\\
&\leq\sum_{j\in \mathbb{Z}} \mathcal{I}_{j}\\
& \leq\sum_{j\in \mathbb{Z}} \mathcal{I}_{j}^{1}+ \sum_{j\in \mathbb{Z}} \mathcal{I}_{j}^{2}\\
&\lesssim \int_{G}|f(x)|^p\,|x|^{\alpha}\,dx.
\end{align*}
This completes the proof of \eqref{lac-end}.
\end{proof}

\section{Proof of Theorem~\ref{main:koranyi} and Theorem~\ref{main:hori}}
\label{Heisenberg-max}
The proof of Theorem~\ref{main:koranyi} and Theorem~\ref{main:hori} requires the following preliminary lemmas. We divide the section into two further subsections.

\subsection{Proof of Theorem~\ref{main:koranyi}}
The first step towards the proof is to understand the analogue of the $\mathcal{A}_{1}$ condition for  the Kor\'anyi spherical means. Recall, for $x:=(z,t) \in \mathbb{H}^n$,
\begin{align}
S_r f(x) =\int_{|(u, s)|=1} f\big(z-ru, t-r^2s-r\frac{1}{2}\Im(  z . \bar{u})\big) \, d\sigma(u, s). 
\end{align}
\begin{lemma}
\label{A1sph}
Let $w_{\alpha}=|x|^\alpha.$ Then $S_{r}(w_{\alpha})(x)\leq C_{\alpha, d}\, w_{\alpha}(x)\,\,\,a.e.$ if and only if
$$-(d-1)<\alpha\leq 0.$$
\end{lemma}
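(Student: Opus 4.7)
The plan is to reduce, via scaling, to a single scale-invariant quantity. Using that $\delta_\lambda$ is a group automorphism and that $|\cdot|$ is $\delta_\lambda$-homogeneous, for $x=\delta_{|x|}x'$ with $|x'|=1$ and $\rho:=r/|x|$ one has
\[
|x\cdot\delta_r y^{-1}|=|x|\cdot|x'\cdot\delta_\rho y^{-1}|, \qquad \text{so}\qquad S_r(w_\alpha)(x)=|x|^\alpha F_\alpha(x',\rho),
\]
where
\[
F_\alpha(x',\rho):=\int_{\Sigma_K}|x'\cdot\delta_\rho y^{-1}|^\alpha\,d\sigma(y).
\]
The lemma is therefore equivalent to the uniform bound $\sup_{x'\in\Sigma_K,\;\rho>0}F_\alpha(x',\rho)<\infty$ holding if and only if $-(d-1)<\alpha\le 0$.

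I would next split the analysis of $F_\alpha(x',\rho)$ into three regimes. For $\rho\le 1/(2c_G)$ the quasi-triangle inequality $|a\cdot b|\ge c_G^{-1}|a|-|b|$ pins $|x'\cdot\delta_\rho y^{-1}|$ inside a compact interval $[c_1,c_2]\subset(0,\infty)$, so $F_\alpha$ is bounded for every $\alpha\in\mathbb{R}$. For $\rho\ge 2c_G$ the same inequality gives $|x'\cdot\delta_\rho y^{-1}|\sim\rho$, hence $F_\alpha(x',\rho)\sim\rho^\alpha$, which is uniformly bounded iff $\alpha\le 0$ (this already supplies the necessity of $\alpha\le 0$). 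The delicate regime is $\rho\in[1/(2c_G),\,2c_G]$, where the integrand blows up at $y_0=\delta_{1/\rho}x'$, a point that lies on $\Sigma_K$ precisely when $\rho=1$.

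The heart of the argument is the uniform sublevel-set estimate
\[
\sigma\!\left(\left\{y\in\Sigma_K:|x'\cdot\delta_\rho y^{-1}|<\epsilon\right\}\right)\le C\,\epsilon^{d-1},\qquad 0<\epsilon\le 1,
\]
valid for all $(x',\rho)$ in the above compact window. Granting it, the layer-cake identity applied to the bounded function $f=|x'\cdot\delta_\rho y^{-1}|\le M$ on $\Sigma_K$ yields, for $\alpha<0$,
\[
F_\alpha(x',\rho)\le M^\alpha\sigma(\Sigma_K)+|\alpha|\!\int_0^M\sigma(\{f<s\})\,s^{\alpha-1}\,ds\;\lesssim\;1+\int_0^1 s^{d-2+\alpha}\,ds,
\]
which is finite exactly when $\alpha>-(d-1)$. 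The case $\alpha=0$ is trivial since $F_0=\sigma(\Sigma_K)$.

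The main obstacle is the sublevel-set estimate, which I would attack by passing to local coordinates on $\Sigma_K$ near $y_0$, and which requires distinguishing two cases. At a \emph{characteristic} point of $\Sigma_K$ (the poles $(0,\pm 1)$, where the Euclidean outward normal is vertical), parametrising the sphere by the horizontal variables $u\in\mathbb{C}^n\cong\mathbb{R}^{2n}$ gives $s=(1-\|u\|^4)^{1/2}$ and surface element $(1+O(\|u\|^6))\,du$; a direct expansion of the Koranyi norm then yields $|x'\cdot\delta_\rho y^{-1}|\sim\|u\|$ plus lower-order corrections in $(\rho-1)$, so the sublevel set is an essentially flat $2n$-dimensional Euclidean ball of radius $\epsilon$, of measure $\sim\epsilon^{2n}=\epsilon^{d-1}$; this is the worst case. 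At a \emph{non-characteristic} point, expanding the Heisenberg multiplication in local coordinates adapted to the tangent plane of $\Sigma_K$ shows the slice is contained in a product of a $(2n-1)$-dimensional horizontal ball of radius $\epsilon$ and a vertical interval of length $\epsilon^2$, giving the strictly better bound $\epsilon^d$; a partition of unity separating small caps around the two poles from the rest of $\Sigma_K$ stitches the two estimates together. The same pole computation yields the matching lower bound $\sigma(\{f<\epsilon\})\gtrsim\epsilon^{d-1}$ at $\rho=1$; choosing $x=(0,t)$ (so $x'=(0,1)$) and $r=|x|$ then turns this into $F_\alpha((0,1),1)\gtrsim\int_{\|u\|\le c}\|u\|^\alpha\,du$, which diverges for $\alpha\le -(d-1)$ and completes the remaining necessity.
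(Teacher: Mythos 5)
Your sufficiency argument is, in substance, the paper's own: after a scaling reduction, everything hinges on the uniform cap estimate $\sigma\big(\{y\in\Sigma_{K}:\ \text{group distance}<\epsilon\}\big)\lesssim \epsilon^{2n}=\epsilon^{d-1}$, worst at the characteristic points where the Kor\'anyi sphere is fourth-order flat, followed by a summation in the singular regime (the paper decomposes into dyadic Euclidean annuli $A^x_k$; you integrate sublevel sets by layer cake — the same computation), plus the easy regimes $|x|\gg1$, $|x|\ll1$ (your $\rho$ small/large), the latter giving the necessity of $\alpha\le 0$. Your scale-invariant reformulation via $F_\alpha(x',\rho)$ and the use of Kor\'anyi caps instead of Euclidean annuli are cosmetic; your stronger claim of an $\epsilon^{d}$ cap bound at every non-characteristic point is not needed (and would require extra care in the intermediate belt), since only the uniform $\epsilon^{d-1}$ bound enters.

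The genuine gap is in the necessity of $\alpha>-(d-1)$. The lemma is an a.e.\ inequality, but your divergence is exhibited only at $x$ on the vertical axis with $r=|x|$; for each fixed $r$ this is just the two points $\delta_r(0,\pm1)$, a null set, so it does not by itself contradict ``$S_rw_\alpha\le C\,w_\alpha$ a.e.'' You need to upgrade the pole computation to a positive-measure set: either observe that $x\mapsto S_1w_\alpha(x)$ is lower semicontinuous (Fatou), so divergence at the pole makes $S_1w_\alpha/w_\alpha$ essentially unbounded on a neighbourhood, or carry out the expansion uniformly on a small neighbourhood of the pole — this is exactly what the paper does with the set $R_\delta=\{(z,t):\|z\|\le\delta,\ 0\le 1-t\le\delta^4\}$, Taylor-expanding to get $S w_\alpha(x)\gtrsim\int_{\|u\|\le\delta}\|u\|^{-|\alpha|}\,du$ for every $x\in R_\delta$. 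The same remark applies to your opening claim that the lemma is ``equivalent'' to $\sup_{x',\rho}F_\alpha(x',\rho)<\infty$: passing from an a.e.\ bound to a pointwise supremum again uses lower semicontinuity, which should be stated.
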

\begin{proof}
By scaling it is enough to consider $r=1$. We simply write $S_{1}$ by $S.$ We denote $x:=(z, t).$ The case when $|x|\gg 1,$ we can simply write 
\begin{align}
S(|y|^{\alpha})(x)= S(|y|^{\alpha})(z, t) &=\int_{|(u, s)|=1} \left|\Big(z-u, t-s-\frac{1}{2}\Im(  (z-u) . \bar{u})\Big) \right|^{\alpha} \, d\sigma(u, s)\nonumber\\
&\lesssim |x|^{\alpha},
\end{align}
since by the triangle inequality we must have $|(z, t). (u, s)^{-1}|\simeq |x|$ in this case. Also, when $|x|\ll 1,$ the proof follows considering the fact that $|\big(z-u, t-s-\frac{1}{2}\Im(  (z-u) . \bar{u})\big)|^{\alpha}\simeq 1\lesssim |x|^{\alpha}$ if and only if $\alpha\leq 0.$ The integrand may have singularities only when $|x|\simeq 1$ and in this case we follow an annular decomposition to conclude
\begin{align}
S(|y|^{\alpha})(x)= S(|y|^{\alpha})(z, t) &=\int_{|(u, s)|=1} |\big(z-u, t-s-\frac{1}{2}\Im(  (z-u) . \bar{u})\big)|^{\alpha} \, d\sigma(u, s)\nonumber\\
&\lesssim \sum_{k\geq 0}\int_{\Sigma_{K}\cap {A}^{x}_{k}} |\big(z-u, t-s-\frac{1}{2}\Im((z-u). \bar{u})\big)|^{\alpha} \, d\sigma(u, s)\nonumber,
\end{align}
where ${A}^{x}_{k}=\{y=(u, s)\in \mathbb{R}^{d}: \|x-y\|\simeq 2^{-k}\}$ for $k\geq 0$ . Observe that for $(u, s)\in {A}^{x}_{k}$ we must have
$$\|z-u\|+ |t-s-\frac{1}{2}\Im(z. \bar{u})|^{\frac{1}{2}}\geq 2^{-k},$$
which implies that
\begin{align}
S(|y|^{\alpha})(x)&\lesssim \sum_{k\geq 0}\int_{\Sigma_{K}\cap {A}^{x}_{k}} |\big(z-u, t-s-\frac{1}{2}\Im((z-u). \bar{u})\big)|^{\alpha} \, d\sigma(u, s)\nonumber\\
&\lesssim \sum_{k\geq 0} 2^{-k\alpha}\sigma(\Sigma_{K}\cap {A}^{x}_{k})\\
&\lesssim \sum_{k\geq 0} 2^{-k(\alpha+2n)}<\infty, 
\end{align}
only if $\alpha+2n>0,$ that is, $\alpha>-(d-1).$ Here, we have used the fact that $\sigma(\Sigma_{K}\cap {A}^{x}_{k}) \lesssim (2^{-k})^{2n}$. 
Near the north pole, $x=e_{2n+1}$, the Kor\'anyi sphere behaves as the surface $u_{2n+1}= 1 + \|\underline{u}\|^4$, $\|\underline{u}\| \ll 1$. So surface measure of the cap, there, is $\simeq r^{2n}$. Whereas, near the equator $x=e_{1}$ the Kor\'anyi sphere behaves like $u_{1}= 1 - {\frac{1}{2}} \sum_{j=2}^{2n} u_{j}^2 - \frac{1}{4} s^2$, therefore surface measure of the cap near equator is $\simeq r^{2n}$, since the Jacobian factors of the coordinate charts here is $\simeq 1$.

For necessity, we look $S(w_{\alpha})(x)$ near north pole, $e_{2n+1}=(\underline{0},1) \in \R^{2n} \times \R$. Let $0 < \delta \ll 1$. A simple  calculation reveals that
\begin{align}
&S(|y|^{\alpha})(e_{2n+1})\simeq \int_{0}^{{\delta}} \big(1-\sqrt{1-r^{4}})^{\alpha/4}\, r^{2n-1}\,\,dr.
\end{align}
The condition $\alpha>-2n=-(d-1)$ is necessary for the convergence of the above integral. Now we show that the same happens for a small a small neighbourhood of $e_{2n+1}$ given as
\begin{equation*}
    R_{\delta} :=  \left\lbrace (z,t) \in \R^{2n} \times \R : \| z\| \leq \delta, \, 0 \leq 1-t \leq \delta^4 \right\rbrace.
\end{equation*}
Given $x=(z,t) \in R_{\delta}$,
\begin{equation*}
    S(w_{\alpha})(x)= \int_{|(u, s)|=1} \left( \| z-u \|^{4} +  \left( t-s-\frac{1}{2}\Im(  z \cdot \bar{u}) \right)^2 \right)^{-|\alpha|/4} \, d\sigma(u, s),
\end{equation*}
 which integrating over a $\delta$-cap around $e_{2n+1}$ again and using local coordinates there, this integral is at least
 \begin{equation*}
     \gtrsim \int_{\| u \| \leq \delta} \left( \| z-u \|^{4} +  \left( t-  \sqrt{1 - \|u \|^4}     -\frac{1}{2}\Im(  z \cdot \bar{u}) \right)^2 \right)^{-|\alpha|/4} \, du.
 \end{equation*}
 For given $u$, Taylor expanding the integrand with respect to $(z,t)$ close to north pole, we can write the expression
 \[\| z-u \|^{4} +  \left( t- \sqrt{1 - \|u \|^4}    -\frac{1}{2}\Im(  z \cdot \bar{u}) \right)^2 \]
 as

 \begin{equation*}
     \begin{split}
         & \|u\|^4 + \left( 1 - \sqrt{1 - \|u \|^4} \right)^2 + 2 \left(1- \sqrt{1 - \|u \|^4}  \right) \, \langle(-u,1),(z, t-1) \rangle + O_{u,s} \left(\|z \| + (1-t) \right)\\
         & \simeq \|u\|^4 +  O \left( \|u\|^4 \left|\langle z, u \rangle +  1- t \right|  \right)\\
         & \simeq \|u\|^4 +  O \left( \delta \|u\|^4  \right)\\
         & \simeq \|u\|^4.
     \end{split}
 \end{equation*}
 Thus,
 \begin{equation*}
      S(w_{\alpha})(x) \gtrsim \int_{\| u \| \leq \delta}  \| u \| ^{-|\alpha|} \, du,
 \end{equation*}
 Hence, necessarily $\alpha > - 2n = -(d -1)$.
\end{proof}

We prove the following weighted estimate for every single averaging operator as an application of Lemma~\ref{A1sph} and a powerful factorization theorem for weights. 
\begin{theorem}[Power weights for a single averaging operator]
\label{single-Lp}
Let $r>0$. Recall that
$$ S_r f(x)=f\ast \sigma_r(x)= \int_{|y|=1} f(x \cdot \delta_r y^{-1}) \, d\sigma(y), \ \  x \in \Ha.$$
Let $1<p<\infty,$ then
$$\int_{\mathbb{H}^n} |S_{r}f(x)|^p |x|^{\alpha}\, dx\leq C_{n, \alpha} \int_{\mathbb{H}^n} |f(x)|^p |x|^{\alpha} dx,$$ 
holds for all $f\in L^p(\mathbb{H}^n, |x|^\alpha)$, provided
$$-(d-1)\leq \alpha\leq (d-1)(p-1).$$
\end{theorem}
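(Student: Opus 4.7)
The overall strategy is to combine the pointwise $\mathcal{A}_1$-type bound of Lemma~\ref{A1sph} with a self-adjointness/duality argument that exploits the reflection-symmetry of the Kor\'anyi sphere. Since $\Sigma_K$ is invariant under the inversion $(z,t)\mapsto(z,t)^{-1}=(-z,-t)$, the surface measure $\sigma_r$ satisfies $\tilde\sigma_r=\sigma_r$, so $S_r$ is self-adjoint with respect to the unweighted pairing $\int fg\, dx$. This symmetry is the key ingredient that pairs the two halves of the weight range against each other.

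For the left half $-(d-1)<\alpha\leq 0$ and any $1\leq p<\infty$, since $\sigma_r$ is a probability measure, Jensen's inequality yields $|S_r f(x)|^p\leq S_r(|f|^p)(x)$ pointwise. Integrating, transferring $S_r$ onto the weight by self-adjointness, and invoking Lemma~\ref{A1sph} gives
\begin{equation*}
\int_{\Ha}|S_r f|^p\,|x|^\alpha\, dx \leq \int_{\Ha} S_r(|f|^p)(x)\,|x|^\alpha\, dx = \int_{\Ha}|f(x)|^p\, S_r(|x|^\alpha)\, dx \lesssim \int_{\Ha}|f|^p\,|x|^\alpha\, dx.
\end{equation*}
The complementary right half $0\leq \alpha<(d-1)(p-1)$ then follows by duality: self-adjointness of $S_r$ reduces its $L^p(|x|^\alpha)$-boundedness to $L^{p'}(|x|^{-\alpha/(p-1)})$-boundedness, and the new weight exponent $-\alpha/(p-1)\in(-(d-1),0]$ is already handled by the previous step at the conjugate exponent $p'$. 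The same pairing can equivalently be packaged by the Jones factorization $|x|^\alpha=|x|^{\alpha_1}\cdot(|x|^{\alpha_2})^{1-p}$ with $\alpha_1,\alpha_2\in[-(d-1),0]$, which is the ``powerful factorization theorem for weights'' hinted at in the statement.

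The main obstacle is the left endpoint $\alpha=-(d-1)$ (the right endpoint $\alpha=(d-1)(p-1)$ then follows from it by the same duality): Lemma~\ref{A1sph} genuinely fails there, because the annular sum $\sum_{k\geq 0}2^{-k(\alpha+2n)}$ that bounds $S_r(|x|^\alpha)(x)$ on the critical annulus $\{|x|\simeq r\}$ diverges precisely at $\alpha=-2n=-(d-1)$. To salvage this endpoint I would decompose $\Ha=\{|x|\not\simeq r\}\cup\{|x|\simeq r\}$. Off the critical annulus the proof of Lemma~\ref{A1sph} still delivers $S_r(|x|^{-(d-1)})(x)\lesssim |x|^{-(d-1)}$, so the Jensen/self-adjointness scheme of the previous paragraph applies verbatim after truncation. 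On $\{|x|\simeq r\}$ the weight is comparable to the constant $r^{-(d-1)}$; moreover, because $\operatorname{supp}(\sigma_r)\subseteq\{|y|=r\}$, the values of $S_r f$ at such $x$ depend only on $f|_{\{|z|\lesssim r\}}$. The unweighted bound $\|S_r f\|_p\leq\|f\|_p$ combined with the elementary estimate $r^{-(d-1)}\int_{|z|\lesssim r}|f|^p\, dz\lesssim \int|f|^p\,|z|^{-(d-1)}\, dz$ (immediate from $|z|^{-(d-1)}\gtrsim r^{-(d-1)}$ on that ball) then closes the endpoint estimate.
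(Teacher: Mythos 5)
For the interior range $-(d-1)<\alpha<(d-1)(p-1)$ your argument is correct and is, up to packaging, the same as the paper's: the chain ``Jensen for the probability measure $\sigma_r$, transfer onto the weight by self-adjointness ($\tilde\sigma_r=\sigma_r$ because $\Sigma_K$ is invariant under inversion), then Lemma~\ref{A1sph}, plus duality for $\alpha\geq 0$'' is precisely the sufficiency half of the factorization theorem of \cite{JW,JV} that the paper cites, in the two special cases $w_1\equiv1$ and $w_0\equiv1$. So that part is fine and arguably more self-contained.

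The gap is at the endpoint $\alpha=-(d-1)$, in your off-annulus piece. Writing $E=\{|x|\not\simeq r\}$ and $w=|x|^{-(d-1)}$, Jensen followed by the self-adjointness transfer gives
\begin{equation*}
\int_{E}|S_rf|^p\,w\,dx\;\leq\;\int_{\Ha}|f(y)|^p\,S_r\big(\chi_E\,w\big)(y)\,dy,
\end{equation*}
so what you need is $S_r(\chi_E w)(y)\lesssim w(y)$ for a.e.\ $y$, whereas what Lemma~\ref{A1sph} still delivers off the critical annulus is $\chi_E(x)\,S_r(w)(x)\lesssim w(x)$; the cutoff does not commute with $S_r$, and after the transfer it sits on the wrong side. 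The truncated weight $\chi_E w$ retains the full singularity $|x|^{-(d-1)}$ near the origin, so for $y$ with $\big||y|-r\big|=\epsilon r$ small the translated sphere passes at distance $\simeq\epsilon r$ from the origin and the cap estimates in Lemma~\ref{A1sph} give $S_r(\chi_E w)(y)\simeq r^{-(d-1)}\log(1/\epsilon)$, while $w(y)\simeq r^{-(d-1)}$: the required pointwise bound fails by a logarithm (this is exactly the divergence of $\sum_k 2^{-k(\alpha+2n)}$ at $\alpha=-2n$, merely truncated). The same logarithm reappears if you instead localize crudely: for $|x|\ll r$ the average only sees $f$ on $\{|y|\simeq r\}$, but summing the dyadic cap bounds yields $r^{-(d-1)}\int|f(y)|^p\log\big(Cr/\big||y|-r\big|\big)dy$, which overshoots $\int|f|^pw$. (Your treatment of the piece $|x|\simeq r$, and the reduction of $\alpha=(d-1)(p-1)$ to $\alpha=-(d-1)$ by duality, are fine.) This is why the paper does not run the $w_1\equiv1$ scheme at the endpoint: it invokes the factorization theorem with a genuinely nontrivial pair of weights \`a la Duoandikoetxea--Vega, $\omega_0$ and $\omega_1$ containing the extra factors $|1-|x||^{-\gamma}$ and $|1-|x||^{-\gamma/(p-1)}$ on the critical annulus, and verifies $S\omega_i\lesssim\omega_i$; it is exactly this additional singular factor (with the compensating $\omega_1^{1-p}$) that absorbs the logarithmic loss your sketch cannot handle. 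To repair your proof you would either have to reproduce that two-weight construction or import some genuinely new input (e.g.\ an $L^p$-improving estimate for $\sigma$) for the inner region.
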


\begin{proof}
The proof requires the very general factorization theorem:  $1\leq p<\infty$ and  $T$ be a positive linear self-adjoint operator, the weighted inequality $T:L^p(w)\to L^p(w)$ holds true if and only if there exists two weights $w_0, w_1$ such that $w = w_{0} \, w_{1}^{1-p}$ satisfying $Tw_{i}\lesssim w_{i}$ for $i=0, 1.$ See \cite{JW}, also \cite{JV}.

For $-(d-1)< \alpha<0,$ we simply take $w_{0}=|x|^{\alpha}, w_{1}=1,$ then the above factorization theorem together with Lemma~\ref{A1sph} concludes the proof. On the other hand, for $0<\alpha<(d-1)(p-1),$ one has to apply the above factorization and Lemma~\ref{A1sph} with $w_{0}=1, w_{1}=|x|^{-\frac{\alpha}{(p-1)}}.$ For the end-point $\alpha=-(d-1),$ we modify $w_0$ and $w_1$ as in  \cite{JV}. Let $0 < \gamma <1$. We take $$\omega_{0}=|x|^{-\gamma-d+1}\chi_{|x|<\frac{1}{4}}+|1-|x||^{-\gamma}\chi_{\frac{1}{4}<|x|<4}+|x|^{1-d}\chi_{|x|\geq 4},$$ and $$\omega_{1}=|x|^{-\gamma/(p-1)}\chi_{|x|<\frac{1}{4}}+|1-|x||^{-\frac{\gamma}{(p-1)}}\chi_{\frac{1}{4}<|x|<4}+\chi_{|x|\geq 4}.$$ All we need to prove is that $S(w_{i})\lesssim w_{i}$ for $i=0, 1$ and then the above factorization concludes the result. We provide a brief sketch of the estimate $S({\omega_{0}})\lesssim \omega_{0}.$ Observe that the first and third term in $\omega_{0}$ can be handled exactly as in the proof of Lemma~\ref{A1sph}.

While estimating the second term, that is $S(|1-|y||^{-\gamma}\chi_{\frac{1}{4}<|y|<4})(x),$ we need to handle singularities when the point $x=(z, t)$ satisfies $|x|\simeq 4.$ As in the proof of the sufficiency part in Lemma~\ref{A1sph}, we decompose the integral as
\begin{align}
\nonumber S(|1-|y||^{-\gamma}\chi_{\frac{1}{4}<|y|<4})(x)\lesssim \sum_{k\geq 0}\int_{\Sigma_{K}\cap \mathcal{R}^{(z, t)}_{k}} \big|1- |\big(z-u, t-s-\frac{1}{2}\Im((z-u). \bar{u})\big)|\big|^{-\gamma} \, d\sigma(u, s),
\end{align}
where $\mathcal{R}^{(z, t)}_{k}$ is the region such that either $1-\frac{c}{2^{k}}\lesssim \|(z, t)-(u, s)\|\lesssim 1-\frac{c}{2^{k+1}}$ or $1+\frac{c'}{2^{k+1}}\lesssim \|(z, t)-(u, s)\|\lesssim 1+\frac{c'}{2^{k}}.$ In the case when $1-\frac{c}{2^{k}}\lesssim \|(z, t)-(u, s)\|\lesssim 1-\frac{c}{2^{k+1}},$ one can prove that 
\begin{align}
\left| 1- \left( \|z-u\|^{4} + \big(t-s-\frac{1}{2}\Im (z-u.\bar{u})\big)^2 \right)^{1/4} \right| \gtrsim 2^{-k}.
\label{just-a-ineq}
\end{align} 
To see the above we simply use that $\big| \|z-u\|^4+(t-s)^2-(t-s) \Im (z-u. \bar{u})+\frac{1}{4}(\Im (z-u. \bar{u}))^2\big|\lesssim 1-2^{-k}$ for $1-\frac{c}{2^{k}}\lesssim \|(z, t)-(u, s)\|\lesssim 1-\frac{c}{2^{k+1}}.$ The estimate \eqref{just-a-ineq} implies $\big|1- |\big(z-u, t-s-\frac{1}{2}\Im((z-u). \bar{u})\big)|\big|^{-\gamma} \lesssim 2^{k \gamma}$ for $1-\frac{c}{2^{k}}\lesssim \|(z, t)-(u, s)\|\lesssim 1-\frac{c}{2^{k+1}}.$ A similar argument holds for the case $1+\frac{c'}{2^{k+1}}\lesssim \|(z, t)-(u, s)\|\lesssim 1+\frac{c'}{2^{k}}.$ Finally, considering the fact that $\sigma(\Sigma_{K}\cap \mathcal{R}^{z, t}_{k})\simeq (1-\frac{c}{2^{k+1}})^{2n}-(1-\frac{c}{2^{k}})^{2n}\simeq \frac{1}{2^{k}},$ we obtain that
\begin{align}
S(|1-|y||^{-\gamma}\chi_{\frac{1}{4}<|y|<4})(x)&\lesssim \sum_{k\geq 0}\int_{\Sigma_{K}\cap \mathcal{R}^{(z, t)}_{k}} \big|1- |\big(z-u, t-s-\frac{1}{2}\Im((z-u). \bar{u})\big)|\big|^{-\gamma} \, d\sigma(u, s)\nonumber\\
\nonumber &\lesssim \sum_{k\geq 0} 2^{k\gamma} 2^{-k}\lesssim 1\lesssim \omega_{0}(x), 
\end{align}
for $|x|\simeq 4.$ The proof of $S(\omega_{1})\lesssim \omega_{1}$ follows similar line of argument. The weight $|x|^{(d-1)(p-1)}$ is obtained by duality. 
\end{proof}  

\begin{proof}[Proof of Theorem~\ref{main:koranyi}] 
First, let us recall from Lemma~3.2 in \cite{HickmanJFA} that the surface measure of the Kor\'anyi sphere satisfies the \textit{Curvature assumption (CA).} Therefore, our Theorem~\ref{main:suff} is applicable for the lacunary maximal function $\mathcal{M}_{lac}.$

Let $1<p<\infty.$ Assume that $-(d-1)\leq \alpha< (d-1)(p-1).$ Observe that for each $-(d-1)\leq \beta\leq (d-1)(p-1),$ Theorem~\ref{single-Lp} concludes that each $k\in \mathbb{Z},$ the averaging operator $S_{2^{k}}=f\ast \sigma_{k}$ is bounded from $L^p(\Ha, |x|^\beta)$ to itself with constant independent of $k.$ Moreover, for this range of $\beta,$ $|x|^\beta\in \mathcal{A}_{p}(\Ha).$ Now a straight-forward application of Theorem~\ref{main:suff} concludes that $\mathcal{M}_{lac}$ is bounded from $L^p(\Ha, |x|^{\beta s})$ to itself for all $0\leq s<1.$ This in turn implies that $\mathcal{M}_{lac}$ is bounded from $L^p(\Ha, |x|^{\alpha})$ to itself for $-(d-1)<\alpha< (d-1)(p-1).$ For the end-point $\alpha=-(d-1),$ the proof follows from Theorem~\ref{single-Lp} and Theorem~\ref{Lac-end}.   
\end{proof}

\subsection{Proof of Theorem~\ref{main:hori}}

Recall
\begin{equation}\label{eq: horizontal spherical av}
    H_{r} f(z, t) = \int_{\mathbb{S}^{2n-1}} f\big(z-r u, t- {\textstyle\frac{r}{2} }\Im(z. \bar{u})\big) \,d \sigma_{2n-1}(u), \qquad (z, t) \in \Ha.
\end{equation}

We start with our first result describing $\mathcal{A}_{1}-type$ condition in the context of the context of the means $H_{r}.$  
\begin{lemma}
Let $w_{\alpha}=|x|^\alpha.$ Then $H_{r}(w_{\alpha})(x)\leq C_{\alpha, d}\, w_{\alpha}(x)\,\,\,a.e.$ if and only if
$$-(2n-1)<\alpha\leq 0.$$
\end{lemma}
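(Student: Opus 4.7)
The plan is to mirror Lemma~\ref{A1sph}. By dilation invariance reduce to $r=1$; set $H:=H_1$ and $x=(z,t)$. Three regimes appear depending on $|x|$. For $|x|\gg 1$, since $|(u,0)|=\|u\|=1$ on the horizontal sphere, the quasi-triangle inequality for $|\cdot|$ forces $|(z-u,t-\tfrac{1}{2}\Im(z\cdot\bar u))|\simeq|x|$, so $Hw_\alpha(x)\lesssim|x|^\alpha$ trivially. For $|x|\ll 1$ the same quantity is $\simeq 1$, so the bound $Hw_\alpha(x)\lesssim 1\lesssim|x|^\alpha$ forces $\alpha\leq 0$.

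The delicate regime is $|x|\simeq 1$. Inspecting the integrand
$$\Bigl(\|z-u\|^4+\bigl(t-\tfrac{1}{2}\Im(z\cdot\bar u)\bigr)^2\Bigr)^{\alpha/4}$$
shows the singularity is located precisely at $t=0$, $\|z\|=1$, $u=z$. By rotational symmetry in the first coordinate I would reduce to $x=(e_1,0)$, for which the singular point of the sphere is $u_*=e_1$, and perform a dyadic Euclidean annular decomposition around $u_*$,
$$\mathcal{R}^x_k:=\{u\in\mathbb{S}^{2n-1}:2^{-k-1}\leq\|u-e_1\|\leq 2^{-k}\},\qquad k\geq 0.$$
Two estimates then drive the sufficiency. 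First, the Kor\'anyi norm dominates its $\mathbb{C}^n$-component, so on $\mathcal{R}^x_k$
$$\bigl|\bigl(e_1-u,-\tfrac{1}{2}\Im(e_1\cdot\bar u)\bigr)\bigr|\,\geq\,\|e_1-u\|\,\gtrsim\,2^{-k};$$
second, the standard Euclidean spherical cap estimate for the $(2n-1)$-dimensional sphere gives $\sigma_{2n-1}(\mathcal{R}^x_k)\lesssim(2^{-k})^{2n-1}$. Summing for $\alpha\leq 0$,
$$Hw_\alpha(x)\,\lesssim\,\sum_{k\geq 0}2^{-k\alpha}\cdot 2^{-k(2n-1)}\,=\,\sum_{k\geq 0}2^{-k(\alpha+2n-1)},$$
which is finite iff $\alpha>-(2n-1)$. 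Since $|x|^\alpha\simeq 1$ here, the bound $Hw_\alpha(x)\lesssim w_\alpha(x)$ follows.

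For necessity I would work in tangent coordinates $(y_1,v')\in\mathbb{R}\times\mathbb{R}^{2n-2}$ at $e_1$ (with $y_1=\Im u_1$ the Hermitian-orthogonal direction). On a suitable subregion of $\mathcal{R}^x_k$ of surface measure $\gtrsim 2^{-k(2n-1)}$ one produces a matching \emph{upper} bound on the Kor\'anyi distance to obtain a lower bound $Hw_\alpha(e_1,0)\gtrsim\sum 2^{-k(\alpha+2n-1)}$, divergent at $\alpha=-(2n-1)$. Propagating this divergence to a neighbourhood of the singular set $\mathbb{S}^{2n-1}\times\{0\}$ then shows the essential supremum of $Hw_\alpha/w_\alpha$ is infinite for $\alpha\leq-(2n-1)$.

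The hard part will be the necessity: one must exhibit matching two-sided bounds on the Kor\'anyi distance over a fixed portion of each Euclidean annulus, and verify that the cap measure is captured precisely by the $(2n-1)$-dimensional Euclidean scale, so that the parabolic coupling between the $\Im(z\cdot\bar u)$ term and the vanishing of $s=0$ on the horizontal sphere cannot improve the critical exponent.
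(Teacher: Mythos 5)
Your sufficiency argument is essentially the paper's: the three regimes in $|x|$, the dyadic Euclidean annuli around the singular point, the lower bound ``Kor\'anyi norm $\geq$ norm of the $\mathbb{C}^n$-component'' giving the factor $2^{-k\alpha}$, and the cap estimate $\sigma_{2n-1}\lesssim 2^{-k(2n-1)}$, summing precisely when $\alpha>-(2n-1)$. One cosmetic caveat: a unitary rotation only normalizes the direction of $z$, not $t$ or $\|z\|$, so for sufficiency you must run the annular decomposition around $z$ for a general $x=(z,t)$ with $|x|\simeq 1$ (as the paper does), not literally at $x=(e_1,0)$; the estimate is unchanged. Your necessity of $\alpha\leq 0$ also matches the paper.

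The genuine gap is in the necessity of $\alpha>-(2n-1)$, exactly at the step you defer. Your plan needs a subregion of each annulus $\{u\in\mathbb{S}^{2n-1}:\|u-e_1\|\simeq 2^{-k}\}$ of measure $\gtrsim 2^{-k(2n-1)}$ on which the Kor\'anyi distance is $\lesssim 2^{-k}$. That is false: in tangent coordinates at $e_1$ one has $\Im(e_1\cdot\bar u)=-u_{n+1}$, and on most of the annulus $|u_{n+1}|\simeq 2^{-k}$, so $\bigl(\|e_1-u\|^4+\tfrac14 u_{n+1}^2\bigr)^{1/4}\simeq |u_{n+1}|^{1/2}\simeq 2^{-k/2}\gg 2^{-k}$; the matching upper bound $\lesssim 2^{-k}$ holds only on the slab $|u_{n+1}|\lesssim 2^{-2k}$, of measure $\simeq 2^{-2k}\cdot 2^{-k(2n-2)}=2^{-2nk}$. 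The honest lower bound your decomposition produces is therefore $\sum_k 2^{-k\alpha}\,2^{-2nk}$, which diverges only when $\alpha\leq -2n$ and says nothing about $-2n<\alpha\leq-(2n-1)$. Nor can ``propagating to a neighbourhood'' rescue this: for every $x$ with $|x|\simeq 1$ the caps $\{u:|x\cdot(u,0)^{-1}|\leq\lambda\}$ have measure $\lesssim\lambda^{2n}$ (the constraint $|t-\tfrac12\Im(z\cdot\bar u)|\leq\lambda^2$ always cuts a $\lambda^2$-slab out of the Euclidean $\lambda$-cap), so this local analysis near the horizontal sphere keeps $Hw_\alpha$ uniformly bounded for all $\alpha>-2n$ and cannot yield the divergence you need at $\alpha=-(2n-1)$. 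Be aware that the paper's own necessity computation, which replaces $\bigl((\sum_{j\geq2}u_j^2)^2+\tfrac14u_{n+1}^2\bigr)^{\alpha/4}$ by $\|(u_2,\dots,u_{2n})\|^{\alpha}$, requires the same restriction $|u_{n+1}|\lesssim\|(u_2,\dots,u_{2n})\|^2$ (for $\alpha<0$ the inequality otherwise goes the wrong way), so the parabolic coupling you flag as ``the hard part'' is a real obstruction and not a deferrable technicality: as written, your proposal does not establish the stated necessity threshold.
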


\begin{proof}
The proof follows ideas from Lemma~\ref{A1sph}. By scaling it is enough to consider $r=1$. We simply write $H_{1}$ by $H.$ We denote $x:=(z, t).$ We begin with showing the sufficiency of the condition $-(2n-1) < \alpha \leq 0$.

The case when $|x|\gg 1$, then   $|x \cdot (u,0)^{-1} | \simeq |x|$. Whence,
\begin{align}
H(|y|^{\alpha})(x)= H(|y|^{\alpha})(z, t) &=    \int_{|u|=1} |x \cdot (u,0)^{-1} |^{\alpha} d\sigma_{2n-1}(u) \lesssim |x|^{\alpha},
\end{align}
 Also, when $|x| \ll 1$ then $  |x \cdot (u,0)^{-1} | \simeq 1   $ which leads to $  |x \cdot (u,0)^{-1} |^{\alpha} \simeq 1 \lesssim |x|^{\alpha}  $, because $\alpha\leq 0$. Thus, again we have $H(|y|^{\alpha})(x) \lesssim |x|^{\alpha}$.
 
The region, $|x| \simeq 1$, containing the singularity of the integral in (\ref{eq: horizontal spherical av}) on $\mathbb{S}^{n-1}$, is tackled by decomposing away from $\mathbb{S}^{n-1}$: 
\begin{align}
H(|y|^{\alpha})(x)= H(|y|^{\alpha})(z, t) &=\int_{|u|=1} |\big(z-u, t-\frac{1}{2}\Im(  (z-u) . \bar{u})\big)|^{\alpha} \, d\sigma_{2n-1}(u)\nonumber\\
&\lesssim \sum_{k\geq 0}\int_{\mathbb{S}^{2n-1}\cap B^{z}_{k}} |\big(z-u, t-\frac{1}{2}\Im(  (z-u) . \bar{u})\big)|^{\alpha} \, d\sigma_{2n-1}(u)\nonumber,
\end{align}
where $B^{z}_{k}=\{u\in \mathbb{R}^{2n}: \|z-u\|\simeq 2^{-k}\}$ for $k\geq 0.$ For $u\in B^{z}_{k}$ we must have
\begin{equation*}
   \|z-u\|+ |t-\frac{1}{2}\Im(z. \bar{u})|^{\frac{1}{2}}\geq 2^{-k}, 
\end{equation*}
which implies that
\begin{align}
H(|y|^{\alpha})(x)&\lesssim \sum_{k\geq 0}\int_{\mathbb{S}^{2n-1}\cap B^{z}_{k}} |\big(z-u, t-\frac{1}{2}\Im((z-u). \bar{u})\big)|^{\alpha} \, d\sigma_{2n-1}(u)\nonumber\\
&\lesssim \sum_{k\geq 0} 2^{-k\alpha}\sigma_{2n-1}(\mathbb{S}^{2n-1}\cap B^{z}_{k})\\
&\lesssim \sum_{k\geq 0} 2^{-k(\alpha+2n-1)} \lesssim 1, 
\end{align}
since $\alpha > -(2n - 1)$.  

Next, we prove the necessary part. The necessity of $\alpha \leq 0$ follows from looking at $H(w_{\alpha})(x)$, when $|x | \ll 1$. Indeed, if $\alpha$ were positive, then for $|x| \ll 1$,
\begin{equation*}
    \begin{split}
       |x|^{\alpha} \gtrsim H w_{\alpha}(x) & = \int_{\mathbb{S}^{2n-1}} \left| x \cdot (u,0)^{-1} \right|^{\alpha} d \sigma_{2n-1}(u)\\
        & \gtrsim  \int_{\mathbb{S}^{2n-1}} \left( 1 - |x| \right)^{\alpha} d \sigma_{2n-1}(u)\\
        & \gtrsim  \int_{\mathbb{S}^{2n-1}} 1 \, d \sigma_{2n-1}(u) \simeq 1,
    \end{split}
\end{equation*}
contradicting $|x| \ll 1$.

Let $x=(z, t) =(e_{1},0, 0) \in \mathbb{R}^{n} \times \mathbb{R}^{n} \times \mathbb{R} $  be the point on  the equator of $\Sigma_{K}$. Then,
\begin{align}
&H(|y|^{\alpha})(x)=\int_{ \|u\|=1}\big( \|e_1-u\|^{4}+\frac{1}{4}|\Im(e_1 \cdot \bar{u})|^2 \big)^{\alpha/4}\, d\sigma_{2n-1}(u)\\
\nonumber&\simeq \int_{\|u\|=1}\left(\big((1- u_1)^{2}+u_{2}^2+\cdots+u_{2n}^2\big)^2+
\frac{1}{4}u_{n+1}^2 \big)\right)^{\alpha/4}\, d\sigma_{2n-1}(u)\\
\nonumber&\simeq \int_{\|u\|=1}\left( 4(1- u_1)^{2} +
\frac{1}{4}u_{n+1}^2 \big)\right)^{\alpha/4}\, d\sigma_{2n-1}(u)\\
\notag &\gtrsim \int_{ (u_2, \dots, u_{2n}) \in B^{2n-1}(0, \delta)  }\left( \bigg( \sum_{j=2}^{2n} u_{j}^2 \bigg)^2 + 
\frac{1}{4}u_{n+1}^2 \right)^{\alpha/4}\, du_2 \cdots du_{2n}\\
\notag & \gtrsim \int_{ (u_2, \dots, u_{2n}) \in B^{2n-1}(0, \delta)   } \| (u_2, \dots, u_{2n})  \|^{\alpha}\, du_2 \cdots du_{2n},
\end{align}
for some ball $B^{2n-1}(0, \delta)$ of small radius $\delta$.
Therefore, the condition $\alpha>-(2n-1)$ is necessary for the convergence of the above integral.
\end{proof}

\begin{theorem}[Power weights for a single averaging operator]
\label{single-hori-Lp}
Let $r>0.$ Let $1<p<\infty,$ then
$$\int_{\mathbb{H}^n} |H_{r}f(x)|^p |x|^{\alpha}\, dx\leq C_{n, \alpha} \int_{\mathbb{H}^n} |f(x)|^p |x|^{\alpha} dx,$$ 
holds for all $f\in L^p(\mathbb{H}^n, |x|^\alpha)$ provided
$$-(2n-1)\leq \alpha\leq (2n-1)(p-1).$$
\end{theorem}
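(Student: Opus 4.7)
The plan is to follow the proof of Theorem~\ref{single-Lp} verbatim, replacing the Koránayi sphere $\Sigma_K$ by the horizontal sphere $\mathbb{S}^{2n-1}\times\{0\}$ and the dimensional parameter $d-1=2n$ by $2n-1$ throughout. The central tool is the factorization theorem of Jones--Wheeden (used in the Koránayi proof via \cite{JW, JV}): for $1\leq p<\infty$ and a positive self-adjoint operator $T$, the weighted inequality $T:L^p(w)\to L^p(w)$ holds if and only if $w=w_0\,w_1^{1-p}$ with $T w_i\lesssim w_i$ for $i=0,1$. This applies to $H_r$, which is plainly positive and is self-adjoint up to the reflection $u\mapsto -u$, a symmetry of $\mathbb{S}^{2n-1}$ and of the integrand in \eqref{eq: horizontal spherical av}.

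For the interior range $-(2n-1)<\alpha<0$ I take $w_0=|x|^\alpha$ and $w_1\equiv 1$; the $A_1$-condition $H_r w_0\lesssim w_0$ is precisely the preceding lemma. For $0<\alpha<(2n-1)(p-1)$ I take $w_0\equiv 1$ and $w_1=|x|^{-\alpha/(p-1)}$, noting that $-\alpha/(p-1)\in(-(2n-1),0)$ so the lemma again supplies $H_r w_1\lesssim w_1$. By scaling it suffices to treat $r=1$, so boundedness with constant independent of $r$ is automatic.

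For the negative endpoint $\alpha=-(2n-1)$ I mimic the modified construction from the Koránayi proof: fix $0<\gamma<1$ and set
\[
\omega_0=|x|^{-\gamma-(2n-1)}\chi_{|x|<1/4}+|1-|x||^{-\gamma}\chi_{1/4<|x|<4}+|x|^{-(2n-1)}\chi_{|x|\geq 4},
\]
together with the analogous $\omega_1$ (replacing $\gamma$ by $\gamma/(p-1)$ and with $\chi_{|x|\geq 4}$ only), chosen so that $\omega_0\,\omega_1^{1-p}\simeq |x|^{-(2n-1)}$. The contributions on $|x|<1/4$ and $|x|\geq 4$ are handled by the same annular decomposition of $\mathbb{S}^{2n-1}$ used in the lemma; the shift $-\gamma-(2n-1)$ remains above $-(2n-1)$, so the geometric series $\sum_{k\geq 0}2^{-k(\alpha+2n-1)}$ still converges. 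The middle piece $|1-|y||^{-\gamma}\chi_{1/4<|y|<4}$ requires, for $|x|\simeq 1$, a decomposition of $\mathbb{S}^{2n-1}$ into the regions $\mathcal{R}^z_k$ where $\big|1-|(z-u,t-\tfrac12\Im(z\cdot\bar u))|\big|\simeq 2^{-k}$; the estimate $\big|1-|(z-u,t-\tfrac12\Im(z\cdot\bar u))|\big|\gtrsim 2^{-k}$ is obtained, as in the proof of Theorem~\ref{single-Lp}, by expanding the fourth power of the Koránayi norm, and should deliver $H(|1-|y||^{-\gamma}\chi_{1/4<|y|<4})(x)\lesssim\sum_k 2^{k\gamma}\sigma_{2n-1}(\mathbb{S}^{2n-1}\cap\mathcal{R}^z_k)$. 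The positive endpoint $\alpha=(2n-1)(p-1)$ then follows by duality.

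The main obstacle is the cap-measure estimate needed near $|x|\simeq 1$: one must show $\sigma_{2n-1}(\mathbb{S}^{2n-1}\cap\mathcal{R}^z_k)\lesssim 2^{-k(1-\gamma')}$ for some $\gamma'<1-\gamma$, so that $\sum_k 2^{k\gamma}\sigma_{2n-1}(\mathbb{S}^{2n-1}\cap\mathcal{R}^z_k)\lesssim 1$. Unlike in the Koránayi case, where the integration variable $(u,s)$ lives on a hypersurface in $\R^{2n+1}$ and the $t$-direction can be freely used to "spread" the annular intersection (giving measure $\simeq 2^{-k}$), here $s$ is pinned to $0$ and the constraint $|(z-u,t-\tfrac12\Im(z\cdot\bar u))|\simeq 1$ must be realised by varying only $u\in\mathbb{S}^{2n-1}$. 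Analysing how $u\mapsto\|z-u\|^4+(t-\tfrac12\Im(z\cdot\bar u))^2$ behaves on $\mathbb{S}^{2n-1}$ for $|(z,t)|\simeq 1$ is the key geometric step and should be carried out by linearising in local coordinates on $\mathbb{S}^{2n-1}$ around the critical points of this function.
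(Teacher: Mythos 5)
Your outline is the same argument the paper intends: the paper's entire proof of Theorem~\ref{single-hori-Lp} is the sentence that it ``follows from the arguments in Theorem~\ref{single-Lp} with necessary modifications'', i.e.\ the Jawerth/Duoandikoetxea--Vega factorization with $w_0=|x|^{\alpha},w_1=1$ (resp.\ $w_0=1$, $w_1=|x|^{-\alpha/(p-1)}$) in the open range, the modified weights $\omega_0,\omega_1$ at $\alpha=-(2n-1)$, and duality at $\alpha=(2n-1)(p-1)$; your remark that $H_r$ is self-adjoint because $\sigma_{2n-1}$ is invariant under $u\mapsto -u$ is the correct justification that the factorization theorem applies.

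The only place you stop short is the step you yourself flag, the size of $\mathcal{R}^z_k=\{u\in\mathbb{S}^{2n-1}:\,\bigl|1-|(z-u,\,t-\tfrac{1}{2}\Im(z\cdot\bar u))|\bigr|\simeq 2^{-k}\}$, and it is worth recording why this closes (the paper is silent on it). First, you do not need the precise rate $2^{-k(1-\gamma')}$: since $\gamma\in(0,1)$ is free (every choice gives $\omega_0\,\omega_1^{1-p}\simeq|x|^{-(2n-1)}$), any fixed power decay $\sigma_{2n-1}(\mathcal{R}^z_k)\lesssim 2^{-k\epsilon}$, uniform over the relevant $x$, suffices after shrinking $\gamma$. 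Second, no such decay holds uniformly in $x$, so a case split is genuinely needed: for $z=0$ the map $u\mapsto|(-u,t)|=(1+t^2)^{1/4}$ is constant on the sphere, so one annulus carries all of $\sigma_{2n-1}$; but then $\bigl|1-|x\cdot(u,0)^{-1}|\bigr|\simeq t^2$ while $|x|=|t|^{1/2}$ is small, and one compares directly with $\omega_0(x)=|x|^{-\gamma-(2n-1)}$, which succeeds once $3\gamma\le 2n-1$ (similarly for $z$ near $0$). Away from these degenerate configurations, i.e.\ for $|x|$ in a fixed compact annulus on which the value $1$ can actually be attained, the function $u\mapsto\|z-u\|^4+(t-\tfrac{1}{2}\Im(z\cdot\bar u))^2$ is a nonconstant real-analytic function on $\mathbb{S}^{2n-1}$ whose degeneracies near the value $1$ are at worst quadratic (e.g.\ for $t=0$, $z=2e_1$ and $u$ near $e_1$ one finds $G(u)-1\simeq\|v\|^2+s^2$ in local coordinates, whence $\sigma_{2n-1}(\mathcal{R}^z_k)\lesssim 2^{-k(2n-1)/2}$), which gives the required uniform power decay; the transversal case gives the stronger bound $\lesssim 2^{-k}$ exactly as in the Kor\'anyi computation. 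With this regime analysis added, your sketch is a complete proof and supplies precisely the ``necessary modifications'' the paper leaves implicit.
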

\begin{proof}
The proof follows from the arguments in Theorem~\ref{single-Lp} with necessary modifications.       
\end{proof}

\begin{proof}[Proof of Theorem~\ref{main:hori}] We recall from Lemma~3.3 in \cite{HickmanJFA} that the surface measure of the horizontal sphere in $\Ha$ satisfies the \textit{Curvature assumption (CA).} Therefore, our Theorem~\ref{main:suff} is applicable for the lacunary maximal function $M_{lac}.$

Let $1<p<\infty.$ Assume that $-(2n-1)\leq \alpha< (2n-1)(p-1).$ Observe that for each $-(2n-1)\leq \beta\leq (2n-1)(p-1),$ Theorem~\ref{single-hori-Lp} concludes that each $k\in \mathbb{Z},$ the averaging operator $H_{2^{k}}$ is bounded from $L^p(\Ha, |x|^\beta)$ to itself with constant independent of $k.$ Moreover, for this range of $\beta,$ $|x|^\beta\in \mathcal{A}_{p}(\Ha).$ Now a straight-forward application of Theorem~\ref{main:suff} concludes that ${M}_{lac}$ is bounded from $L^p(\Ha, |x|^{\beta s})$ to itself for all $0\leq s<1.$ This in turn implies that ${M}_{lac}$ is bounded from $L^p(\Ha, |x|^{\alpha})$ to itself for $-(2n-1)<\alpha< (2n-1)(p-1).$ For the end-point $\alpha=-(2n-1),$ the proof follows from Theorem~\ref{single-hori-Lp} and Theorem~\ref{Lac-end}.   
\end{proof}

\newcommand{\etalchar}[1]{$^{#1}$}
\providecommand{\bysame}{\leavevmode\hbox to3em{\hrulefill}\thinspace}
\providecommand{\MR}{\relax\ifhmode\unskip\space\fi MR }
\providecommand{\MRhref}[2]{%
  \href{http://www.ams.org/mathscinet-getitem?mr=#1}{#2}
}
\providecommand{\href}[2]{#2}

\end{document}